\theoremstyle{plain}
\newtheorem{thm}{Theorem}[section]
\newtheorem{lem}[thm]{Lemma}
\newtheorem{cor}[thm]{Corollary}
\theoremstyle{definition}
\newtheorem{defn}[thm]{Definition}
\newtheorem{ex}[thm]{Example}
\theoremstyle{plain}
\newtheorem{sthm}{Theorem}[subsection]
\newtheorem{slem}[sthm]{Lemma}
\newtheorem{scor}[sthm]{Corollary}
\newtheorem{sprop}[sthm]{Proposition}
\theoremstyle{definition}
\newtheorem{sdefn}[sthm]{Definition}
\newtheorem{sex}[sthm]{Example}
\newcommand{\mf}[1]{\mbox{$\mathfrak #1$}}
\newcommand{\ol}[1]{\mbox{$\overline #1$}}
\newcommand{\supp}{\textnormal{\textsf{supp}}}
\newcommand{\rep}{\textnormal{\textsf{rep}}}
\newcommand{\patt}{\textnormal{\textsf{[321;3412]}}}
\renewcommand{\repeat}{\textnormal{\textsf{new-rep}}}
\newcommand{\p}{\textnormal{\textsf{[321]}}}
\newcommand{\q}{\textnormal{\textsf{[3412]}}}
\newcommand{\pattern}{\ensuremath{\mathfrak{p}}}
\title{Repetition in reduced decompositions}
\author{Bridget Eileen Tenner}
\address{Department of Mathematical Sciences, DePaul University, Chicago, IL 60614}
\email{bridget@math.depaul.edu}
\thanks{Research partially supported by a DePaul University Faculty Summer Research Grant.}
\subjclass[2010]{05A05; 05A19; 05E15}
\begin{document}

\begin{abstract}
Given a permutation $w$, we show that the number of repeated letters in a reduced decomposition of $w$ is always less than or equal to the number of 321- and 3412-patterns appearing in $w$.  Moreover, we prove bijectively that the two quantities are equal if and only if $w$ avoids the ten patterns 4321, 34512, 45123, 35412, 43512, 45132, 45213, 53412, 45312, and 45231.\\

\noindent \emph{Keywords:} permutation, reduced decomposition, pattern
\end{abstract}

\maketitle

\section{Introduction}

Permutations can be described in a variety of ways, including as a product of simple reflections and in one-line notation.  These two were studied extensively by the author in \cite{rdpp}, and a means for translating properties of one presentation into properties of the other was given.  The first of these presentations is most relevant to the generalized setting of Coxeter groups and the Bruhat order.  There is a rich literature studying various properties of reduced decompositions, including \cite{bjorner-brenti} and \cite{stanley}.  The second of these presentations, one-line notation, is primarily useful when discussing the notion of permutation patterns.  This topic originated in work of Rodica Simion and Frank Schmidt \cite{simion-schmidt}, and has become a popular subfield of combinatorics.

Given any permutation $w$, one can calculate its length, and one can also calculate the number of distinct simple reflections that appear in any reduced decomposition of $w$.  The difference between these two quantities, denoted $\rep(w)$ in this paper, would thus count the number of repeated letters in any reduced decomposition of $w$.  These statistics are readily computed from the presentation of a permutation as a product of simple reflections.

When written in one-line notation, one often looks at the patterns in (or not in) a permutation.  In particular, one can count the number of distinct $321$- and $3412$-patterns in a permutation $w$, and this total will be denoted $\patt(w)$ here.

It was shown in previous work by the author that $\rep(w) = 0$ if and only if $\patt(w) = 0$ \cite{patt-bru}.  Additionally, Daniel Daly shows that $\rep(w) = 1$ if and only if $\patt(w) = 1$ \cite{daly}.  Other than these results, not much has been known about the quantity or type of repetition that might occur within a reduced decomposition of a given permutation.

The ideal conclusion based on the results of \cite{patt-bru} and \cite{daly}, that $\rep(w)$ and $\patt(w)$ would always be equal, is not actually the case, as can be seen with $\rep(4321) = 3$ and $\patt(4321) = 4$.  However, the main result of this paper (Theorem~\ref{thm:main}) is that $\rep(w)$ is always less than or equal to $\patt(w)$, and the two quantities are equal exactly when $w$ avoids each of the patterns
$$\{4321, 34512, 45123, 35412, 43512, 45132, 45213, 53412, 45312, 45231\}.$$
Moreover, in Corollary~\ref{cor:bound}, we give a crude lower bound on the difference $\patt(w) - \rep(w)$ when $w$ contains some of the patterns listed above.

In Section~\ref{section:definitions} of the paper, we introduce the necessary objects and terminology for this work.  Section~\ref{section:main theorem} suggests the relevance of the ten patterns listed above and states the main theorem, while the proof of this theorem is spread over Sections~\ref{section:preliminaries} and~\ref{section:proof}.

\section{Definitions}\label{section:definitions}

This section summarizes the primary objects studied in this work.  More background on this material can be found in \cite{bjorner-brenti} and \cite{macdonald}.

Let $\mathfrak{S}_n$ be the symmetric group on $n$ elements.  The group $\mf{S}_n$ is generated by the simple reflections (also called adjacent transpositions) $\{s_1,\ldots,s_{n-1}\}$, where $s_i$ is the permutation interchanging $i$ and $i+1$, and fixing all other elements.  These permutations satisfy the Coxeter relations
\begin{eqnarray*}
& s_i^2 = 1 &\text{ for all } i,\\
& s_is_j = s_js_i &\text{ if } |i-j| > 1, \text{ and}\\
& s_is_{i+1}s_i = s_{i+1}s_is_{i+1} &\text{ for } 1 \le i \le n-2.
\end{eqnarray*}

We adopt the custom that $s_iw$ interchanges the positions of the values $i$ and $i+1$ in the one-line notation of $w$, and $ws_i$ interchanges the values in positions $i$ and $i+1$ in the one-line notation of $w$.

A permutation $w \in \mf{S}_n$ can also be written in one-line notation as $w = w(1)w(2) \cdots w(n)$.

\begin{ex}
The permutation $3241 \in \mf{S}_4$ maps $1$ to $3$, $2$ to itself, $3$ to $4$, and $4$ to $1$.
\end{ex}

We have now described two substantially different presentations for permutations: products of simple reflections and one-line notation.  A means of translating between these two, and  of inferring properties of one from properties of the other, was given in \cite{rdpp}.

\begin{defn}
If $w = s_{i_1}\cdots s_{i_{\ell(w)}}$ where $\ell(w)$ is minimal, then $s_{i_1}\cdots s_{i_{\ell(w)}}$ is a \emph{reduced decomposition} of $w$.  This $\ell(w)$ is the \emph{length} of $w$.
\end{defn} 

The set of reduced decompositions of a permutation has been studied from several viewpoints, including connections to Young tableaux as described in \cite{stanley}.  In this paper, we will study repetition among the letters in a reduced decomposition of a permutation.  To that end, we make the following definition.

\begin{defn}
Given a permutation $w$, the \emph{support} of $w$ is the set $\supp(w)$ of distinct letters appearing in a reduced decomposition of $w$.
\end{defn}

It is important to clarify why this definition is sound.

\begin{lem}\label{lem:supp well defined}
The set $\supp(w)$ is well defined.
\end{lem}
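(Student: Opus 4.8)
The plan is to deduce this from Tits' solution of the word problem for Coxeter groups (equivalently, Matsumoto's theorem; see \cite{bjorner-brenti}): any two reduced decompositions of a fixed element $w$ are connected by a finite sequence of \emph{braid moves}, that is, applications—read as rewriting rules on words—of the relations $s_is_j = s_js_i$ (for $|i-j| > 1$) and $s_is_{i+1}s_i = s_{i+1}s_is_{i+1}$. The point to emphasize is that the relation $s_i^2 = 1$ is never invoked, since it changes the length of a word and so cannot relate two reduced decompositions of the same permutation.

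Granting that, the verification is short and proceeds in two steps. First, a commutation move $s_is_j \mapsto s_js_i$ with $|i-j|>1$ manifestly leaves the multiset—hence the set—of letters of the word unchanged. Second, a braid move $s_is_{i+1}s_i \mapsto s_{i+1}s_is_{i+1}$ (or its inverse) also leaves the \emph{set} of letters unchanged: the window it rewrites has letter set exactly $\{i,i+1\}$ both before and after, so the move alters only the multiplicities of $i$ and $i+1$, not whether each occurs. Thus every braid move preserves the set of distinct letters appearing in a word, and by induction on the (finite, by Tits' theorem) number of moves joining two reduced decompositions of $w$, all reduced decompositions of $w$ share the same letter set. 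Hence $\supp(w)$ is independent of the chosen reduced decomposition.

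The only genuine input is Tits' theorem itself, which I would simply cite; the combinatorial bookkeeping above is routine, so I do not anticipate a real obstacle. For completeness one could instead give a self-contained characterization—for $J \subseteq \{s_1,\dots,s_{n-1}\}$, the parabolic subgroup $\langle J\rangle$ consists of precisely those permutations admitting a reduced decomposition (equivalently, any expression) using only letters of $J$, whence $\supp(w)$ is the underlying set of the unique minimal such $J$—but establishing that characterization rests on the same exchange/deletion machinery, so routing through Tits' theorem is the most economical choice.
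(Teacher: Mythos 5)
Your proof is correct and follows the same route as the paper: invoke Matsumoto/Tits to connect any two reduced decompositions by braid moves, and observe that commutation and braid relations preserve the set of letters. Your write-up is a bit more explicit (noting that $s_i^2 = 1$ never enters, and checking the two move types separately), but the substance is identical.
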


\begin{proof}
We must prove that the set of letters in a reduced decomposition of a permutation is independent of the particular reduced decomposition chosen as a representative.  Any reduced decomposition of $w$ can be obtained from any other by a series of Coxeter relations (\cite{matsumoto} and \cite{tits}, independently).  These do not change the underlying set of distinct letters in the reduced decomposition, so the set $\supp(w)$ is well defined.  That is, given any reduced decomposition $w = s_{i_1}\cdots s_{i_{\ell}}$,
$$\supp(w) = \{s_{i_1}, \ldots, s_{i_{\ell}}\}.$$
\end{proof}

\begin{ex}
Let $w = 32154 \in \mf{S}_5$.  One reduced decomposition for $w$ is $s_2s_1s_2s_4$, so $\supp(w) = \{s_1,s_2,s_4\}$.
Note that $s_2s_1s_4s_2$ and $s_1s_2s_1s_4$ are also reduced decompositions for $w$, and they each yield the same set $\supp(w)$.
\end{ex}

The following statistics will be crucial in our proof of the main theorem.

\begin{defn}\label{defn:max and min}
Fix $w \in \mf{S}_n$ and $k \in \{1, \ldots, n-1\}$.  Let
$$M_k(w) = \max\{w(1), \ldots, w(k)\}$$
and
$$m_k(w) = \min\{w(k+1), \ldots, w(n)\}.$$
\end{defn}

\begin{lem}
For any $w \in \mf{S}_n$, the values of $M_k(w)$ satisfy
$$M_1(w) \le M_2(w) \le M_3(w) \le \cdots \le M_{n-1}(w),$$
and the values of $m_k(w)$ satisfy
$$m_1(w) \le m_2(w) \le m_3(w) \le \cdots \le m_{n-1}(w).$$
We have strict inequality $M_k(w) < M_{k+1}(w)$ exactly when $w(k+1) > M_k(w)$, and $m_k(w) < m_{k+1}(w)$ exactly when $w(k+1)<m_{k+1}(w)$.
\end{lem}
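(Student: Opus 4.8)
The plan is to observe that each of the two sequences is generated by a one-step recursion, so that consecutive terms differ by at most a single comparison and both the monotonicity and the strictness criterion fall out immediately. First I would note that, directly from Definition~\ref{defn:max and min},
\[
M_{k+1}(w) = \max\{w(1), \ldots, w(k), w(k+1)\} = \max\{M_k(w),\, w(k+1)\}
\]
for every $k \in \{1, \ldots, n-2\}$. Since a maximum of a set containing $M_k(w)$ is at least $M_k(w)$, this gives $M_k(w) \le M_{k+1}(w)$ at once, and chaining these inequalities over $k$ yields the asserted weak monotonicity $M_1(w) \le \cdots \le M_{n-1}(w)$. Moreover $\max\{M_k(w),\, w(k+1)\} > M_k(w)$ precisely when $w(k+1) > M_k(w)$, which is exactly the claimed condition for strict inequality $M_k(w) < M_{k+1}(w)$.

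I would then handle the $m_k$ dually. Again straight from the definition,
\[
m_k(w) = \min\{w(k+1), w(k+2), \ldots, w(n)\} = \min\{w(k+1),\, m_{k+1}(w)\}
\]
for every $k \in \{1, \ldots, n-2\}$. Hence $m_k(w) \le m_{k+1}(w)$, and chaining gives $m_1(w) \le \cdots \le m_{n-1}(w)$; likewise $\min\{w(k+1),\, m_{k+1}(w)\} < m_{k+1}(w)$ exactly when $w(k+1) < m_{k+1}(w)$, which is the stated strictness criterion.

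Since each half reduces to a one-line identity together with an elementary property of $\max$ or $\min$, I expect no real obstacle in this proof; the only point requiring a little care is the index bookkeeping, namely checking that $M_{k+1}(w)$ and $m_{k+1}(w)$ are invoked only for $k+1 \le n-1$, i.e.\ $k \le n-2$, which is precisely the range over which the chained inequalities are asserted.
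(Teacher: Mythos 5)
Your proof is correct and is essentially the same as the paper's, which simply notes that the inequalities follow immediately from the definitions; you have merely made explicit the one-step recursions $M_{k+1}(w)=\max\{M_k(w),w(k+1)\}$ and $m_k(w)=\min\{w(k+1),m_{k+1}(w)\}$ that the paper leaves implicit.
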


\begin{proof}
These inequalities follow immediately from the definitions of $M_k(w)$ and $m_k(w)$.
\end{proof}

The next lemma is a consequence of the definition of the support of a permutation.

\begin{lem}\label{lem:support implications}
Fix a permutation $w \in \mf{S}_n$.  The following statements are equivalent:
\begin{itemize}
\item $s_k \in \supp(w)$,
\item $\{w(1), \ldots, w(k)\} \neq \{1,\ldots,k\}$,
\item $\{w(k+1), \ldots, w(n)\} \neq \{k+1, \ldots, n\}$,
\item $M_k(w) > k$,
\item $m_k(w) < k+1$, and
\item $M_k(w) > m_k(w)$.
\end{itemize}
\end{lem}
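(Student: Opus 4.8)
The plan is to split the six conditions into the five ``one-line'' conditions (second through sixth bullets), which involve only the one-line notation of $w$, and the single ``reduced-word'' condition $s_k\in\supp(w)$. First I would establish that the five one-line conditions are equivalent to each other by elementary counting, and then close the loop by proving that $s_k\in\supp(w)$ is equivalent to $\{w(1),\ldots,w(k)\}\ne\{1,\ldots,k\}$, using Lemma~\ref{lem:supp well defined} together with a length-additivity observation.

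For the one-line equivalences: since $w$ is a bijection of $\{1,\ldots,n\}$, the set $\{w(1),\ldots,w(k)\}$ equals $\{1,\ldots,k\}$ exactly when its complement $\{w(k+1),\ldots,w(n)\}$ equals $\{k+1,\ldots,n\}$, which gives the equivalence of the second and third bullets. Because $\{w(1),\ldots,w(k)\}$ is a set of $k$ positive integers we always have $M_k(w)\ge k$, with equality precisely when that set is $\{1,\ldots,k\}$; this matches the second and fourth bullets. Symmetrically, $\{w(k+1),\ldots,w(n)\}$ is an $(n-k)$-element subset of $\{1,\ldots,n\}$, forcing $m_k(w)\le k+1$ with equality precisely when that set is $\{k+1,\ldots,n\}$; this matches the third and fifth bullets. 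Finally, $M_k(w)$ and $m_k(w)$ are values lying in disjoint sets, so $M_k(w)\ne m_k(w)$; combined with $M_k(w)\ge k$ and $m_k(w)\le k+1$ this shows that $M_k(w)<m_k(w)$ forces $M_k(w)=k$ and $m_k(w)=k+1$, i.e.\ the second bullet fails, whereas if $M_k(w)>k$ then (by the fifth bullet) $m_k(w)\le k<M_k(w)$, so the sixth bullet matches the fourth.

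It remains to tie in the first bullet, and I would do this through the second. For ``$\{w(1),\ldots,w(k)\}\ne\{1,\ldots,k\}$ implies $s_k\in\supp(w)$'' I would argue by contraposition: if some reduced decomposition of $w$ avoids $s_k$, then building $w$ from the identity by successive right multiplications, each factor $s_i$ with $i\ne k$ transposes the values in two positions that are either both $\le k$ or both $\ge k+1$, hence never changes which values occupy the first $k$ positions; since those are $1,\ldots,k$ for the identity, they are $1,\ldots,k$ for $w$. Conversely, if $\{w(1),\ldots,w(k)\}=\{1,\ldots,k\}$, write $w=uv$ where $u$ agrees with $w$ on positions $1,\ldots,k$ and is the identity elsewhere, and $v$ agrees with $w$ on positions $k+1,\ldots,n$ and is the identity elsewhere; then $u$ has a reduced decomposition using only $s_1,\ldots,s_{k-1}$ and $v$ one using only $s_{k+1},\ldots,s_{n-1}$. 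Since no inversion of $w$ has one index $\le k$ and the other $>k$, the inversions of $w$ are exactly those of $u$ together with those of $v$, so $\ell(w)=\ell(u)+\ell(v)$ and the concatenated word is a reduced decomposition of $w$ avoiding $s_k$; Lemma~\ref{lem:supp well defined} then gives $s_k\notin\supp(w)$.

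I expect the genuine content to lie in the bridge between the first bullet and the rest; the other four equivalences are bookkeeping about subsets of $\{1,\ldots,n\}$. Within that bridge, the one point needing care is the claim $\ell(w)=\ell(u)+\ell(v)$, which is what makes the concatenated word reduced: it rests on the observation that when $\{w(1),\ldots,w(k)\}=\{1,\ldots,k\}$ the one-line notation of $w$ decomposes into an independent block on positions $1,\ldots,k$ and an independent block on positions $k+1,\ldots,n$, so there is no inversion straddling position $k$. Alternatively one could invoke the standard description of the parabolic subgroup $\langle s_i : i\ne k\rangle$, but the inversion count is self-contained.
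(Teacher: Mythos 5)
Your proof is correct and takes essentially the same route as the paper: both rest on the observation that $s_k\in\supp(w)$ if and only if some inversion of $w$ straddles position $k$, with the remaining equivalences being elementary bookkeeping about subsets of $\{1,\ldots,n\}$. Where the paper asserts this bridge as a known consequence of how simple reflections act, you supply a self-contained argument---tracking the multiset occupying positions $1,\ldots,k$ under right multiplications by $s_i$ with $i\ne k$ in one direction, and constructing a reduced word avoiding $s_k$ from the block decomposition $w=uv$ together with the inversion count $\ell(w)=\ell(u)+\ell(v)$ in the other---which is a modest but genuine addition of detail rather than a different approach.
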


\begin{proof}
Suppose that $s_k \in \supp(w)$.  This means that $s_k$ appears at least once in each reduced decomposition of $w$, which means that there is some inversion $w(i) > w(j)$ in $w$, where $i \le k < j$.  Thus the set $\{w(1), \ldots, w(k)\}$ cannot equal $\{1,\ldots,k\}$, and, equivalently, the set $\{w(k+1), \ldots, w(n)\}$ cannot equal $\{k+1, \ldots, n\}$.  Also equivalently, the set $\{w(1), \ldots, w(k)\}$ contains an element larger than $k$, and, equivalently, the set $\{w(k+1), \ldots, w(n)\}$ contains an element less than $k+1$.

If, on the other hand, $s_k \not\in \supp(w)$, then there is no inversion such as described in the previous paragraph.  Therefore $w(1)\cdots w(k)$ is a permutation of $\{1,\ldots,k\}$ and $w(k+1)\cdots w(n)$ is a permutation of $\{k+1,\ldots,n\}$.  Thus $M_k(w) = k$ and $m_k(w) = k+1$.
\end{proof}

In this paper, we will study the relationship between two statistics of a permutation.  The first of these is related to the support of a permutation.

\begin{defn}
Given a permutation $w$, let $\rep(w)$ be the quantity
\begin{equation}\label{eqn:rep defn}
\rep(w) = \ell(w) - |\supp(w)|.
\end{equation}
\end{defn}

This quantity is so named because it counts the number of simple reflections in a reduced decomposition of $w$, when reading from one end to the other, which repeat previously seen letters.  The fact that this latter description is well defined may not be immediately obvious, given that a permutation may have more than one reduced decomposition.  However, this does not affect $\supp(w)$, as shown by Lemma~\ref{lem:supp well defined}, and so $\rep(w)$ is well defined, by equation~\eqref{eqn:rep defn}.

\begin{ex}\label{ex:35412 reps}
Let $w = 35412$, where $\ell(w) = 7$ and $\supp(w) = \{s_1,s_2,s_3,s_4\}$.  Thus $\rep(w) = 7-4 = 3$.  Relatedly, one reduced decomposition for $w$ is $s_2s_1s_3s_2s_4s_3s_2$, and reading from left to right we encounter the repeated simple reflections which are marked in
$$s_2s_1s_3 \fbox{$s_2$} s_4 \fbox{$s_3$} \fbox{$s_2$}.$$
There are three such letters, so $\rep(w) = 3$.
\end{ex}

The other statistic we will consider relates to permutation patterns.

\begin{defn}
Let $w \in \mf{S}_n$ and $p \in \mf{S}_k$ for $k \le n$.  The permutation $w$ \emph{contains the pattern $p$} if there exist $i_1 < \cdots < i_k$ such that $w(i_1) \cdots w(i_k)$ is in the same relative order as $p(1) \cdots p(k)$, in which case $w(i_1) \cdots w(i_k)$ is an \emph{occurrence} of $p$ in $w$.  For notational clarity, we will sometimes denote this pattern by $\{w(i_1), \ldots, w(i_k)\}$.  If $N = \max\{w(i_1), \ldots, w(i_k)\}$, then this $w(i_1) \cdots w(i_k)$ is an \emph{$N$-occurrence} of $p$.  If $w$ does not contain $p$, then $w$ \emph{avoids} $p$, or is \emph{$p$-avoiding}.
\end{defn}

The set of all occurrences of a pattern $p$ in a permutation $w$ can be partitioned by the largest letter appearing in the occurrence:
$$\{\text{occurrences of } p \text{ in } w \} = \bigsqcup_N \{N\text{-occurrences of } p \text{ in } w \}.$$

\begin{ex}
Continuing Example~\ref{ex:35412 reps}, there are two occurrences of $3412$ in $w$: $3512$ and $3412$.  The first of these is a $5$-occurrence, and the second is a $4$-occurrence.  The permutation $w$ is $123$-avoiding because it has no increasing subsequence of length $3$.
\end{ex}

There is much interest in enumeration related to permutation patterns (see, for example, \cite{claesson-kitaev, marcus-tardos, simion-schmidt}).  The portion of this scholarship relevant to the current work is the enumeration of occurrences of a pattern $p$ appearing in a permutation $w$.

\begin{defn}
Given a permutation $w$ and a pattern $p$, let $\textsf{[p]}_N(w)$ denote the number of $N$-occurrences of $p$ in $w$.  Let
$$\textsf{[p]}(w) = \sum_N \textsf{[p]}_N(w)$$
be the total number of occurrences of $p$ in $w$.
\end{defn}

\begin{ex}
Continuing Example~\ref{ex:35412 reps}, we have $\p_5(w) = 2$ and $\p_i(w) = 0$ for all $i\neq 5$.  Also, $\q_4(w) = \q_5(w) = 1$, and $\q_i(w) = 0$ otherwise.
\end{ex}

For reasons that will be suggested by Theorem~\ref{thm:0 and 1}, we are most concerned with the patterns $321$ and $3412$, and we will count the number of distinct occurrences of these patterns.

\begin{defn}
Given a permutation $w$, and a positive integer $N$, let
$$\patt_N(w) = \p_N(w) + \q_N(w).$$
Let $\patt(w)$ be the quantity
\begin{eqnarray}\label{eqn:patt definition}
\patt(w) &=& \p(w) + \q(w)\\
\nonumber &=& \sum_N \patt_N(w).
\end{eqnarray}
\end{defn}

\begin{ex}\label{ex:35412 patts}
Continuing Example~\ref{ex:35412 reps}, let us calculate $\patt(w)$.  The distinct occurrences of $321$ in $w$ are $\{541,542\}$, and the distinct occurrences of $3412$ in $w$ are $\{3512,3412\}$.  Thus
$$\begin{array}{r@{\ =\ }l@{\ =\ }l}
\patt_4(w) & 0 + 1 & 1,\\
\patt_5(w) & 2 + 1 & 3, \text{ and}\\
\patt(w) & 2+2 & 1 + 3 =4.
\end{array}$$
\end{ex}

Using the notation defined above, the following results were shown previously, the first by the author and the second  by Daniel Daly.

\begin{thm}[\cite{patt-bru} and \cite{daly}]\label{thm:0 and 1}
For any permutation $w$, 
\begin{enumerate}\renewcommand{\labelenumi}{(\alph{enumi})}
\item $\rep(w) = 0$ if and only if $\patt(w) = 0$, and
\item $\rep(w) = 1$ if and only if $\patt(w) = 1$.
\end{enumerate}
\end{thm}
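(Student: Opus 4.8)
The plan is to treat both parts with a single ``peeling'' induction: strip the largest value off the permutation one step at a time, and bound the amount of repetition produced at each step by the patterns present at that step.

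Fix $w \in \mf{S}_n$ with $w \neq e$ and let $p = w^{-1}(n)$. If $p = n$, then $s_{n-1} \notin \supp(w)$ by Lemma~\ref{lem:support implications}, and deleting the last entry of $w$ yields a permutation $u$ with $\ell(u) = \ell(w)$, $\supp(u) = \supp(w)$ and $\patt(u) = \patt(w)$, so we may pass to $u$. If $p < n$, then right-multiplying by $s_p s_{p+1} \cdots s_{n-1}$ slides the value $n$ to the end length-additively, so $\ell(w) = \ell(v') + (n-p)$ where $v' := w\, s_p s_{p+1} \cdots s_{n-1}$; viewed in $\mf{S}_{n-1}$, $v'$ is just $w$ with the value $n$ removed. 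Since $w(p) = n$ with $p \le k$ for each $k \in \{p, \ldots, n-1\}$, Lemma~\ref{lem:support implications} gives $\{s_p, \ldots, s_{n-1}\} \subseteq \supp(w)$, while $\supp(w)$ and $\supp(v')$ visibly agree on $\{s_1, \ldots, s_{p-1}\}$ because sliding $n$ rightward does not disturb the first $p-1$ positions; these two observations combine to the identity
\begin{equation*}
\rep(w) = \rep(v') + \bigl| \supp(v') \cap \{s_p, s_{p+1}, \ldots, s_{n-2}\} \bigr|.
\end{equation*}
Iterating this identity, together with the trivial $p = n$ step, writes $\rep(w)$ as a sum of $n-1$ nonnegative ``defect'' terms, one per peeling step.

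The heart of the matter is a single defect term $D = \bigl| \supp(v') \cap \{s_p, \ldots, s_{m-2}\} \bigr|$ for a current permutation $u \in \mf{S}_m$ with $p = u^{-1}(m) < m$. I would show that $D > 0$ precisely when $u$ contains a $321$ whose leading entry is $m$, or a $3412$ whose ``$4$'' is $m$. One direction is the observation that such a pattern supplies an inversion of $v'$ straddling a position in $[p, m-2]$; the other is a short case analysis on an inversion of $v'$ witnessing $D > 0$, whose only delicate point is that an inversion lying to the left of position $p$ must be completed to a $321$ using a descent among $u(p+1), \ldots, u(m)$, or, if those entries happen to be increasing, to a $3412$ using the two smallest of them. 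Granting this dictionary, part (a) is immediate: pattern avoidance is inherited by every peel, so $\patt(w) = 0$ forces every defect to vanish and hence $\rep(w) = 0$; conversely, a fixed occurrence of $321$ or $3412$ in $w$ persists under peeling until its own largest entry becomes the current maximum, at which step it forces $D > 0$, so $\patt(w) > 0$ forces $\rep(w) > 0$.

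For part (b), the dictionary of the previous paragraph already yields $\patt(w) = 1 \Rightarrow \rep(w) \ge 1$ and $\rep(w) = 1 \Rightarrow \patt(w) \ge 1$; the remaining content is to upgrade each ``$\ge 1$'' to an equality, i.e. to rule out $\rep(w) \ge 2$ when $\patt(w) = 1$ and $\patt(w) \ge 2$ when $\rep(w) = 1$. This is the main obstacle, and I see two routes. The first is to strengthen the defect dictionary to a genuine inequality, ``$D \le$ (number of distinct $321$- or $3412$-occurrences of $u$ whose largest entry equals $m$)'', and sum over peeling steps to get $\rep(w) \le \patt(w)$, which settles both parts simultaneously; but this local-to-global estimate is precisely the delicate argument behind Theorem~\ref{thm:main}, so it is no lighter than that theorem. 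The second route is to invoke the explicit structural descriptions of the permutations containing exactly one $321$ (and, separately, exactly one $3412$ and no $321$) and to verify $\rep = 1$ on each family by hand. Both routes are carried out in \cite{patt-bru} and \cite{daly}, and for the present paper we take Theorem~\ref{thm:0 and 1} as known.
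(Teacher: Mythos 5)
The paper does not actually prove Theorem~\ref{thm:0 and 1}: it is imported from \cite{patt-bru} and \cite{daly}, and the text later observes that Theorem~\ref{thm:main} recovers it, since $\patt(w)\in\{0,1\}$ forces avoidance of every $\phi\in\Phi$. Your proposal reads this situation correctly. Your ``defect'' identity is exactly Lemma~\ref{lem:new repeats} of the paper (with $v'=\ol{w}$ and the defect set equal to $\repeat(w)$), and your dictionary --- a defect term is positive iff the current permutation has a $321$ or $3412$ occurrence whose largest entry is the current maximum --- is correct; the forward direction is Definition~\ref{defn:patterns} applied to any $k\in\repeat(w)$, and the converse is the persistence argument you describe. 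This gives a clean self-contained proof of part (a). For part (b) you correctly identify that the remaining content is the quantitative bound $\rep(w)\le\patt(w)$, which is precisely what Theorem~\ref{thm:main} establishes, so a direct argument here would duplicate (or be circular with) the main development; deferring to the cited sources, as the paper itself does, is the right call. In short, your proposal independently rediscovers a good portion of the paper's Section~\ref{section:preliminaries} machinery, uses it to settle (a), and handles (b) exactly the way the paper does: by citation.
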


Theorem~\ref{thm:0 and 1} gives a clear indication that the statistic $\rep$ is related to whether a permutation contains the patterns $321$ or $3412$.  This arises from the previously mentioned work by the author in \cite{rdpp}, relating patterns (and hence the one-line  presentation of a permutation) with the presentation of a permutation as a product of simple reflections.

The statistics $\rep$ and $\patt$ are not always equal, as shown by Examples~\ref{ex:35412 reps} and~\ref{ex:35412 patts}:
$$\rep(35412) < \patt(35412).$$

In this paper, we will show that $\rep(w)$ never exceeds $\patt(w)$, and we will characterize equality of the two quantities by pattern avoidance.

\section{The main theorem}\label{section:main theorem}

Rather surprisingly, the potential equality of the statistics $\rep$ and $\patt$ mentioned at the end of the last section depends solely on the avoidance of ten patterns, the set of which we will denote $\Phi$.

\begin{defn}
Let
$$\Phi = \{4321, 34512, 45123, 35412, 43512, 45132, 45213, 53412, 45312, 45231\} \subset (\mf{S}_4 \cup \mf{S}_5).$$
\end{defn}

Note that the subset $\{34512, 45123, 35412, 43512, 45132, 45213, 53412, 45312, 45231\} \subset \Phi$ can be expressed as the single marked mesh pattern
\begin{center}
\begin{tikzpicture}[scale=.5]
\fill[black!20] (.1,4.5) arc(180:0:.4) -- (.9,3.9) -- (2.5,3.9) arc(90:0:.4) -- (2.9,1.9) -- (4.5,1.9) arc(90:0:.4) -- (4.9,.5) arc(0:-180:.4) -- (4.1,1.1) -- (2.5,1.1) arc(270:180:.4) -- (2.1,3.1) -- (.5,3.1) arc(270:180:.4) -- (.1,4.5);
\draw (.1,4.5) arc(180:0:.4) -- (.9,3.9) -- (2.5,3.9) arc(90:0:.4) -- (2.9,1.9) -- (4.5,1.9) arc(90:0:.4) -- (4.9,.5) arc(0:-180:.4) -- (4.1,1.1) -- (2.5,1.1) arc(270:180:.4) -- (2.1,3.1) -- (.5,3.1) arc(270:180:.4) -- (.1,4.5);
\foreach \x in {1,2,3,4} {\draw (\x,0) -- (\x,5);}
\foreach \y in {1,2,3,4} {\draw (0,\y) -- (5,\y);}
\draw (-3,2) node[left] {$1$};
\draw (8,2) node {\phantom{1}};
\path[->] (-3,2.25) edge[bend left] (.5,3.5);
\fill[black] (1,3) circle (2mm);
\fill[black] (2,4) circle (2mm);
\fill[black] (3,1) circle (2mm);
\fill[black] (4,2) circle (2mm);
\end{tikzpicture}
\end{center}
where the marking of this region is $1$, as indicated.  The reader is referred to \cite{ulfarsson} for more information about these objects.

To suggest the relevance of the set $\Phi$, let us compare $\rep(\phi)$ and $\patt(\phi)$ for all $\phi \in \Phi$, writing $\rep(\phi)$ as the difference $\ell(\phi) - |\supp(\phi)|$, and $\patt(\phi)$ as the sum $\p(\phi) + \q(\phi)$ in equation~\eqref{eqn:patt definition}.
$$\begin{array}{c|c|c}
\phi \in \Phi & \rep(\phi) & \patt(\phi)\\
\hline
4321 &  6 - 3 = 3 & 4 + 0 = 4\\
34512 & 6 - 4 = 2 & 0 + 3 = 3\\
45123 & 6 - 4 = 2 & 0 + 3 = 3\\
35412 & 7 - 4 = 3 & 2 + 2 = 4\\
43512 & 7 - 4 = 3 & 2 + 2 = 4\\
45132 & 7 - 4 = 3 & 2 + 2 = 4\\
45213 & 7 - 4 = 3 & 2 + 2 = 4\\
53412 & 8 - 4 = 4 & 4 + 1 = 5\\
45312 & 8 - 4 = 4 & 4 + 1 = 5\\
45231 & 8 - 4 = 4 & 4 + 1 = 5
\end{array}$$
Observe that for each $\phi \in \Phi$, we have $\rep(\phi) < \patt(\phi)$.

We are now able to state the main theorem of the paper.

\begin{thm}\label{thm:main}
If a permutation $w$ avoids every pattern in the set $\Phi$, then
$$\rep(w) = \patt(w).$$
Otherwise,
$$\rep(w) < \patt(w).$$
\end{thm}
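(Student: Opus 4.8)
The plan is to reduce to the indecomposable case, build an explicit injection from the repetitions of one fixed reduced decomposition of $w$ into the set of $321$- and $3412$-occurrences of $w$, and then determine exactly when that injection is surjective. For the reduction, write $w = B_1 \oplus \cdots \oplus B_r$ for its decomposition into indecomposable blocks. By Lemma~\ref{lem:support implications} the positions $k$ with $s_k \notin \supp(w)$ are precisely the boundaries between consecutive blocks, so $\ell$, $|\supp|$, and hence $\rep$, are additive over the $B_i$; and since $321$, $3412$, and all ten members of $\Phi$ are indecomposable permutations, every occurrence of such a pattern lies inside a single block, so $\patt$ is additive over the $B_i$ and $w$ contains a member of $\Phi$ if and only if some $B_i$ does. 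It therefore suffices to treat indecomposable $w \in \mf{S}_n$, for which $\supp(w) = \{s_1,\dots,s_{n-1}\}$ and hence $\rep(w) = \ell(w) - (n-1)$.

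Next I would fix the reduced decomposition coming from the Lehmer code, namely $w = \prod_{i=1}^{n}(s_{i+c_i-1}\cdots s_i)$ with $c_i = \#\{j>i : w(j)<w(i)\}$, and read it from left to right. The letter $s_k$ occurs once for each $i\le k$ with $i+c_i>k$, and all such occurrences except the one with the smallest $i$ are repetitions; thus $\rep(w)$ is the number of pairs $(k,i)$ with $i\le k<i+c_i$ admitting some $i'<i$ with $i'+c_{i'}>k$. To each such pair I would attach a pattern occurrence: since there are only $k-i$ positions strictly between $i$ and $k$, the inequality $i+c_i>k$ forces a position $j>k$ with $w(j)<w(i)$, and likewise $i'+c_{i'}>k$ forces a position $j'>k$ with $w(j')<w(i')$; then, according to the relative order of $w(i')$ against $w(i)$ (and of the chosen low values), the four entries $w(i'),w(i),w(j'),w(j)$ contain a $321$ (when $w(i')>w(i)$) or a $3412$ (when $w(i')<w(i)$). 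Showing that this assignment can be made canonically and is injective yields $\rep(w)\le\patt(w)$, and specializing to $\rep\in\{0,1\}$ recovers Theorem~\ref{thm:0 and 1}.

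Finally I would analyze surjectivity: the injection is a bijection exactly when every $321$- and $3412$-occurrence of $w$ is hit. I would classify the ways an occurrence can fail to be charged to a repetition, the obstruction being always the presence, among the entries available near the left end of the occurrence and near the cut it straddles, of one of a short list of forbidden local configurations of four or five values, and then prove that this list is exactly $\Phi$. Granting this, $w$ avoiding $\Phi$ makes the map bijective, so $\rep(w)=\patt(w)$, while if $w$ contains some $\phi\in\Phi$ the map is not onto and, being still injective, forces $\rep(w)<\patt(w)$; counting the missed occurrences in the latter case should also produce the bound of Corollary~\ref{cor:bound}. The real difficulty is this last step: one must first choose the low positions $j,j'$ correctly --- presumably using $m_k(w)$ and the monotone statistics of Definition~\ref{defn:max and min} --- so that the assigned configuration is genuinely a $321$ or $3412$ rather than a bare inversion, and then run the finite case analysis, organized by the relative order of the high entries and of the low entries, that simultaneously produces the list $\Phi$ and certifies that an occurrence avoiding every member of $\Phi$ is always in the image. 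I expect essentially all of the work to reside in that case analysis, and it is where the lemmas on $M_k(w)$ and $m_k(w)$ would be deployed.
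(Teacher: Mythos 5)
Your route differs genuinely from the paper's, but as submitted it is a plan rather than a proof: essentially all of the argument (injectivity of the assignment, the classification of $321$/$3412$-occurrences that fall outside its image, and the identification of the obstruction set as exactly $\Phi$) is deferred to a case analysis you do not carry out. The paper instead inducts on the maximal letter $N$: deleting it gives $\ol{w}$ with $\rep(w) = \rep(\ol{w}) + |\repeat(w)|$ (Lemma~\ref{lem:new repeats}), so at each level one only has to match $\repeat(w)$ against $\patt_N(w)$, the $N$-occurrences. This stratification by maximal letter is load-bearing: Definition~\ref{defn:patterns} produces three clean types for $\pattern_k(w)$, the overcounting Propositions~\ref{prop:overcounting 3412}--\ref{prop:overcounting with 4321} and undercounting Propositions~\ref{prop:321 must use m_k}--\ref{prop:3412 unique 1} each handle a bounded local configuration, and Corollary~\ref{cor:bound} (one defect for each level $r$ at which a pattern of $\Phi$ first appears) falls out of the same stratification, which your non-inductive Lehmer-code approach would not give naturally. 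Your reduction to sum-indecomposable blocks is valid, but buys nothing hard: by Lemma~\ref{lem:support implications} the indices $k$ with $s_k\notin\supp(w)$ contribute no repetitions and no crossing occurrences anyway, and the entire substance of the theorem still has to be established within a single block.

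The one step you flesh out also has a gap. You assert that if $i'<i$ both contribute $s_k$ in the Lehmer word and $j,j'>k$ are positions with $w(j)<w(i)$, $w(j')<w(i')$, then the four entries $w(i'),w(i),w(j'),w(j)$ contain a $321$ when $w(i')>w(i)$ or a $3412$ when $w(i')<w(i)$. The latter is false for a general choice of $j,j'$. Take $w=35142$, whose Lehmer word is $(s_2s_1)(s_4s_3s_2)(s_4)$; the pair $(k,i,i')=(2,2,1)$ has $w(i')=3<5=w(i)$, and taking $j=4$, $j'=3$ gives the entries $3,5,1,4$, a $2413$-pattern containing neither $321$ nor $3412$. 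A canonical choice of the low position (take $j$ at the position of $m_k(w)$, here $j=5$, giving $3512$, a $3412$-pattern) repairs this, and you gesture at it at the end; but pinning that choice down is exactly what the paper's explicit definition of $\pattern_k(w)$ does, and is not an afterthought — it is the point where the statistics $M_k$ and $m_k$ earn their keep and where the ten patterns of $\Phi$ emerge one by one in Propositions~\ref{prop:321 must use M_k or ol{w}(k)}, \ref{prop:3412 must use m_k}, \ref{prop:3412 must use M_k}, and \ref{prop:3412 unique 1}.
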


This characterization of equality between $\rep$ and $\patt$ if and only if the set $\Phi$ is avoided is recorded in entry P0022 of the Database of Permutation Pattern Avoidance \cite{dppa}, and is enumerated by A191721 in \cite{oeis}.

Observe that Theorem~\ref{thm:main} recovers the result in Theorem~\ref{thm:0 and 1}, since a permutation $w$ in which $\patt(w) \in \{0,1\}$ necessarily avoids every pattern in $\Phi$.  Note also that $0$ and $1$ are the only values for which $\rep(w)$ and $\patt(w)$ are always equal, because there are permutations $\phi \in \Phi$ with $\rep(\phi) = 2$ but $\patt(\phi) = 3$.

Suppose $w \in \mf{S}_N$.  Theorem~\ref{thm:main} is proved by induction on $N$ and involves an assignment of at least one $N$-occurrence of $321$ or $3412$ to each previously used letter involved in positioning $N$ in the one-line notation of $w$, after first positioning all other letters relative to each other.  We must be wary of overcounting these $N$-occurrences of $321$ and $3412$.  The details of the proof are covered in Sections~\ref{section:preliminaries} and~\ref{section:proof}.

\section{Preliminaries for proving the main theorem}\label{section:preliminaries}

\subsection{Notation and elementary results to be used in the proof}

\begin{sdefn}
Consider $w \in \mf{S}_N$.  Define $\ol{w} \in \mf{S}_{N-1}$ by
$$\ol{w}(i) = \begin{cases}
w(i) & \text{if } i < w^{-1}(N), \text{ and }\\
w(i+1) & \text{if } i > w^{-1}(N).
\end{cases}$$
The one-line notation of $\ol{w}$ is obtained from the one-line notation of $w$ by deleting the letter $N$ and sliding all subsequent letters one space to the left.  Moreover, if we think of $\ol{w}$ as a permutation in $\mf{S}_N$ that fixes $N$, then
\begin{equation}\label{eqn:moving N into place}
w = \ol{w}s_{N-1}s_{N-2}\cdots s_{w^{-1}(N)},
\end{equation}
and
$$\ell(w) = \ell(\ol{w}) + N - w^{-1}(N).$$
\end{sdefn}

\begin{sex}
If $w = 35412$, then $\ol{w} = 3412$.  If we consider $\ol{w}$ to be the element $34125 \in \mf{S}_5$, then
$$w = \ol{w}s_4s_3s_2.$$
One reduced decomposition of $\ol{w}$ is $s_2s_1s_3s_2$, and so $s_2s_1s_3s_2s_4s_3s_2$ is a reduced decomposition of $w$.
\end{sex}

Throughout the rest of this section, let $w$ be a permutation in $\mf{S}_N$, and $\ol{w} \in \mf{S}_{N-1}$ be as defined above.

The following set will be crucial in the proof of Theorem~\ref{thm:main}, describing the letters in a reduced word of $w$, but not of $\ol{w}$, which count as repeated letters for $w$.

\begin{sdefn}\label{defn:new-rep}
Let $\repeat(w) = \{k : s_k \in \supp(\ol{w}) \text{ and } w^{-1}(N) \le k\}$.
\end{sdefn}

\begin{slem}\label{lem:new repeats}
\begin{eqnarray*}
\rep(w) &=&\rep(\ol{w}) + |\supp(\ol{w}) \cap \{s_{N-1}s_{N-2}\cdots s_{w^{-1}(N)}\}|\\
&=& \rep(\ol{w}) + |\repeat(w)|.
\end{eqnarray*}
\end{slem}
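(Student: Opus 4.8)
The plan is to derive both equalities directly from the length and support additivity recorded in equation~\eqref{eqn:moving N into place}, together with Lemma~\ref{lem:supp well defined}. If $w^{-1}(N) = N$ then $w = \ol{w}$ and the appended word $s_{N-1}s_{N-2}\cdots s_{w^{-1}(N)}$ is empty, so all three displayed quantities coincide trivially; hence I would assume $w^{-1}(N) \le N-1$ from now on.

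First I would observe that concatenating a reduced decomposition of $\ol{w}$ (viewed inside $\mf{S}_N$) with the word $s_{N-1}s_{N-2}\cdots s_{w^{-1}(N)}$ produces a word for $w$ of length $\ell(\ol{w}) + (N - w^{-1}(N)) = \ell(w)$, hence a reduced decomposition of $w$. By Lemma~\ref{lem:supp well defined}, the support of a permutation is the underlying letter set of any one of its reduced decompositions, so
$$\supp(w) = \supp(\ol{w}) \cup \{s_{N-1}, s_{N-2}, \ldots, s_{w^{-1}(N)}\}.$$
Since the appended set $\{s_{w^{-1}(N)}, \ldots, s_{N-1}\}$ consists of $N - w^{-1}(N)$ distinct simple reflections, inclusion–exclusion gives
$$|\supp(w)| = |\supp(\ol{w})| + (N - w^{-1}(N)) - |\supp(\ol{w}) \cap \{s_{N-1}, \ldots, s_{w^{-1}(N)}\}|.$$
Subtracting $\rep(\ol{w}) = \ell(\ol{w}) - |\supp(\ol{w})|$ from $\rep(w) = \ell(w) - |\supp(w)|$ and using $\ell(w) - \ell(\ol{w}) = N - w^{-1}(N)$ from equation~\eqref{eqn:moving N into place}, the two copies of $N - w^{-1}(N)$ cancel and I am left with $\rep(w) - \rep(\ol{w}) = |\supp(\ol{w}) \cap \{s_{N-1}, \ldots, s_{w^{-1}(N)}\}|$, which is the first asserted equality.

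For the second equality I would note that $\ol{w} \in \mf{S}_{N-1}$, so no reduced decomposition of $\ol{w}$ involves $s_{N-1}$; thus $s_{N-1} \notin \supp(\ol{w})$, and therefore $\supp(\ol{w}) \cap \{s_{N-1}, \ldots, s_{w^{-1}(N)}\}$ equals $\{s_k : s_k \in \supp(\ol{w}),\ w^{-1}(N) \le k \le N-2\}$, which is the same as $\{s_k : s_k \in \supp(\ol{w}),\ w^{-1}(N) \le k\}$. By Definition~\ref{defn:new-rep} this last set is precisely $\{s_k : k \in \repeat(w)\}$, and taking cardinalities completes the argument.

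I do not anticipate a genuine obstacle here; the only point demanding a little care is the claim that appending the decreasing run $s_{N-1}\cdots s_{w^{-1}(N)}$ to a reduced decomposition of $\ol{w}$ remains reduced, and this is settled cleanly by the length count in equation~\eqref{eqn:moving N into place}. Everything else is bookkeeping with $\supp$ and finite cardinalities.
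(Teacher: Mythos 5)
Your proof is correct and follows the same route the paper intends: the paper's own proof simply cites equation~\eqref{eqn:moving N into place}, and you have spelled out exactly the length and support bookkeeping that this citation implicitly invokes, including the key facts that concatenation stays reduced by the length count and that $s_{N-1}\notin\supp(\ol{w})$ so the intersection coincides with $\repeat(w)$. No gaps.
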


\begin{proof}
This follows from equation~\eqref{eqn:moving N into place}.
\end{proof}

Recall the functions $M_k$ and $m_k$ from Definition~\ref{defn:max and min}.
 
\begin{slem}\label{lem:when in new-rep}
$M_k(\ol{w}) > m_k(\ol{w})$ and $w^{-1}(N) \le k$ if and only if $k \in \repeat(w)$.
\end{slem}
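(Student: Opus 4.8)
The plan is to unwind the definition of $\repeat(w)$ from Definition~\ref{defn:new-rep} and match each of its conditions against an equivalent condition supplied by Lemma~\ref{lem:support implications}. Recall that $k \in \repeat(w)$ means precisely two things: that $s_k \in \supp(\ol{w})$, and that $w^{-1}(N) \le k$. The second condition is literally the second condition in the statement we wish to prove, so no work is needed there. The entire content of the lemma is therefore the equivalence
$$s_k \in \supp(\ol{w}) \iff M_k(\ol{w}) > m_k(\ol{w}).$$

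First I would invoke Lemma~\ref{lem:support implications}, applied to the permutation $\ol{w} \in \mf{S}_{N-1}$ rather than to $w$ (here one should note $k$ ranges over $\{1,\ldots,N-2\}$ so the hypotheses of that lemma are met). Among the six equivalent statements listed there is exactly the pair ``$s_k \in \supp(\ol{w})$'' and ``$M_k(\ol{w}) > m_k(\ol{w})$''. Hence the displayed equivalence above is immediate. Conjoining this with the tautological equivalence for the condition $w^{-1}(N) \le k$ gives
$$\bigl(M_k(\ol{w}) > m_k(\ol{w}) \text{ and } w^{-1}(N) \le k\bigr) \iff \bigl(s_k \in \supp(\ol{w}) \text{ and } w^{-1}(N) \le k\bigr) \iff k \in \repeat(w),$$
which is exactly the claim.

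There is essentially no obstacle here; the only point requiring a moment's care is bookkeeping about the index set. Since $\ol{w} \in \mf{S}_{N-1}$, the functions $M_k(\ol{w})$ and $m_k(\ol{w})$ are defined for $k \in \{1,\ldots,N-2\}$, and $\repeat(w) \subseteq \{1,\ldots,N-1\}$; but $s_{N-1} \notin \supp(\ol{w})$ since $\ol{w}$ fixes $N-1$ (equivalently, $\ol w$ viewed in $\mf S_{N-1}$ has no letter $s_{N-1}$ at all), so $k = N-1$ lies in neither side and the equivalence holds trivially there. For $k \le N-2$ one applies Lemma~\ref{lem:support implications} to $\ol w$ directly. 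This is a short lemma whose role is purely to repackage Lemma~\ref{lem:support implications} in the form needed for the induction in Section~\ref{section:proof}, so I would keep the proof to a couple of lines, citing Lemma~\ref{lem:support implications} for the nontrivial equivalence.
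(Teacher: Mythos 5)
Your proof is correct and takes essentially the same approach as the paper: both directions amount to combining Definition~\ref{defn:new-rep} with the equivalence $s_k \in \supp(\ol{w}) \iff M_k(\ol{w}) > m_k(\ol{w})$ supplied by Lemma~\ref{lem:support implications} applied to $\ol{w}$, which is exactly what the paper's two-line proof cites. One small nit: the clause ``$\ol{w}$ fixes $N-1$'' is not right in general (e.g.\ $w = 35412$ gives $\ol{w} = 3412$, which does not fix $4$); the correct justification is the one in your parenthetical, namely that $\supp(\ol{w}) \subseteq \{s_1,\ldots,s_{N-2}\}$, or equivalently that $\ol{w}$ viewed in $\mf{S}_N$ fixes $N$.
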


\begin{proof}
The forward direction of the statement is immediate from the definition of $\repeat(w)$.  The converse of this follows from Lemma~\ref{lem:support implications} and Definition~\ref{defn:new-rep}.
\end{proof}

To show that $\rep(w)$ is a lower bound for $\patt(w)$, we would like to assign, to each element of $\repeat(w)$, at least one $N$-occurrence in $w$ of one of the patterns $\{321,3412\}$.  This assignment should be done carefully to avoid overcounting.  Additionally, to characterize when $\rep(w)$ and $\patt(w)$ are equal, we would like to understand when each $N$-occurrence in $w$ of the patterns $\{321, 3412\}$ corresponds to some element of $\repeat(w)$.

For the remainder of this section, set $M_k = M_k(\ol{w})$ and $m_k = m_k(\ol{w})$ for all $k$.

\begin{sdefn}\label{defn:patterns}
Consider $k \in \repeat(w)$.  Define $\pattern_k(w)$ as follows.
\begin{enumerate}\renewcommand{\labelenumi}{\Roman{enumi}.}
\item If $w^{-1}(N) < w^{-1}(M_k)$, then $\pattern_k(w) = \{N,M_k,m_k\}$, which is a $321$-pattern in $w$.
\item If $w^{-1}(N) > w^{-1}(M_k)$ and $\ol{w}(k) > m_k$, then $\pattern_k(w) = \{N, \ol{w}(k), m_k\}$, which is a $321$-pattern in $w$.
\item Otherwise, set $\pattern_k(w) = \{M_k, N, \ol{w}(k), m_k\}$, which is a $3412$-pattern in $w$.
\end{enumerate}
This $\pattern_k(w)$ is undefined if $k \not\in \repeat(w)$.
\end{sdefn}

Note that $\pattern_k(w)$ is always an $N$-occurrence of either $321$ or of $3412$ because $k \in \repeat(w)$ and thus $M_k > m_k$ by Lemma~\ref{lem:support implications}.  However, it is not clear when $\pattern_k(w)$ and $\pattern_{k'}(w)$ coincide for $k \neq k'$, nor which $N$-occurrences of $321$ or of $3412$ have the form $\pattern_k(w)$ for some $k$.

\subsection{Issues of overcounting}

Consider whether the patterns $\pattern_k(w)$ might overcount $N$-occurrences of $321$ or $3412$ in $w$.

\begin{sprop}\label{prop:overcounting 3412}
There are no distinct $k, k' \in \repeat(w)$ for which $\pattern_k(w)$ and $\pattern_{k'}(w)$ are the same $N$-occurrence of $3412$ in $w$.
\end{sprop}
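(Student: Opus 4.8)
The plan is to argue by contradiction: suppose $k \neq k'$ in $\repeat(w)$ both give rise to the same $3412$-pattern, necessarily via Case~III of Definition~\ref{defn:patterns}, so that
$$\pattern_k(w) = \{M_k, N, \ol{w}(k), m_k\} = \{M_{k'}, N, \ol{w}(k'), m_{k'}\} = \pattern_{k'}(w).$$
Since $N$ occupies the same position in both occurrences, equality of the two occurrences as ordered subsequences of $w$ forces $M_k = M_{k'}$ (the letter appearing before $N$), $\ol{w}(k) = \ol{w}(k')$ (the letter immediately after $N$), and $m_k = m_{k'}$ (the last letter). Without loss of generality assume $k < k'$.

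First I would extract what the three equalities say. From $\ol{w}(k) = \ol{w}(k')$ with $k \neq k'$ I get an immediate contradiction, because $\ol{w}$ is a permutation and so injective — unless the two occurrences use that position differently, but both occurrences place $\ol{w}(k)$ in the slot directly after $N$, so they are literally the same entry of $w$ in the same position. Here one must be careful about the indexing shift between $w$ and $\ol{w}$: the entry ``$\ol{w}(k)$'' sits at position $k$ or $k+1$ in $w$ depending on whether $k$ is before or after $w^{-1}(N)$; since $k, k' \in \repeat(w)$ we have $w^{-1}(N) \le k < k'$, so both are at positions $k+1$ and $k'+1$ of $w$ respectively, and these are distinct positions carrying distinct values. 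So in fact the cleanest route is: the position in $w$ of the entry $\ol{w}(k)$ is determined by $k$, it differs from the position of $\ol{w}(k')$, yet both are claimed to be the entry immediately following $N$ in a common occurrence — contradiction.

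I expect the only real subtlety, and hence the main obstacle, to be handling this position-versus-value bookkeeping carefully, i.e. making sure that ``the same $N$-occurrence'' is interpreted as the same set of positions (equivalently the same multiset of values, since a permutation has distinct values), and then tracking how Definition~\ref{defn:patterns}'s Case~III selects positions $w^{-1}(M_k) < w^{-1}(N) < k+1$ and the position of $m_k$. Once the identification ``second letter of the $3412$ occurrence lives at position $k+1$'' is pinned down, injectivity of $w$ kills the case $k \neq k'$ outright, so no delicate combinatorics is needed beyond that. It may be worth also noting for the reader that Cases~I and~II produce $321$-patterns, not $3412$-patterns, which is why the proposition only needs to rule out coincidences among Case~III outputs; this is why the statement can restrict attention to $3412$.
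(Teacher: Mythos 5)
Your proof is correct and rests on the same key observation as the paper: in a $3412$-occurrence of the form $\pattern_k(w)$, the letter immediately following $N$ is $\ol{w}(k)$, so coincidence of $\pattern_k(w)$ and $\pattern_{k'}(w)$ forces $\ol{w}(k)=\ol{w}(k')$, and injectivity of $\ol{w}$ then gives $k=k'$. The paper states exactly this in one line; your extra position-versus-value bookkeeping is a careful elaboration of the same point rather than a different route.
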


\begin{proof}
If this were the case, then $(M_k,\ol{w}(k),m_k) = (M_{k'},\ol{w}(k'),m_{k'})$.  But then $\ol{w}(k) = \ol{w}(k')$, implying that $k = k'$.
\end{proof}

Therefore, if there is any overcounting of $N$-occurrences of $321$ or $3412$ among the $\{\pattern_k(w)\}$, it must be that $\pattern_k(w)$ and $\pattern_{k'}(w)$ are the same $N$-occurrence of $321$.

\begin{sprop}\label{prop:overcounting M and m}
If there exist distinct $k, k' \in \repeat(w)$ with $\pattern_k(w) = \pattern_{k'}(w)$, then $w$ has an $N$-occurrence of $4321$.
\end{sprop}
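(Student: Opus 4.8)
The plan is to build on Proposition~\ref{prop:overcounting 3412}. Since that result already forbids two distinct indices from producing the same $N$-occurrence of $3412$, a collision $\pattern_k(w)=\pattern_{k'}(w)$ for some distinct $k,k'\in\repeat(w)$ must be a common $N$-occurrence of $321$, so each of $k,k'$ is produced by case~I or case~II of Definition~\ref{defn:patterns}; assume $k<k'$. The first thing I would extract is that, writing the shared occurrence as a decreasing triple $N>a>b$ in $w$ and using that $N$ occurs in both and that $w$ is injective, we must have $b=m_k=m_{k'}=:m$ and $a=a'$, where $a\in\{M_k,\ol w(k)\}$ and $a'\in\{M_{k'},\ol w(k')\}$ according to the case of each index.

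Next I would translate the $\ol w$-statistics into positions of $w$, remembering the one-cell shift at $w^{-1}(N)$: since $k,k'\ge w^{-1}(N)$, the value $\ol w(k)$ sits at position $k+1$ of $w$ and $M_k$ occupies some position $\le k+1$ of $w$ (and if $k$ is in case~II, at a position $<w^{-1}(N)$); similarly for $k'$. With this dictionary in hand, three of the four combinations of cases collapse by injectivity: if both indices are in case~II, then $\ol w(k)=a=a'=\ol w(k')$ forces $k=k'$; if $k$ is in case~I and $k'$ in case~II, then $M_k=a=a'=w(k'+1)$ would put $M_k$ at position $k'+1>k+1$, impossible; and if $k$ is in case~II and $k'$ in case~I, then monotonicity of $M_\bullet(\ol w)$ squeezes $w(k+1)=\ol w(k)\le M_k\le M_{k'}=a'=a=w(k+1)$, forcing $M_k$ to position $k+1>w^{-1}(N)$, contradicting case~II. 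So $k$ and $k'$ are both in case~I and $M_k=a=a'=M_{k'}$.

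The final step is to produce the $N$-occurrence of $4321$. Because $k<k'$, the set of positions $\{k+2,\dots,k'+1\}$ of $w$ is nonempty, so I would pick any $t$ in it; monotonicity of $M_\bullet(\ol w)$ and $m_\bullet(\ol w)$ combined with $M_k=M_{k'}$ and $m_k=m_{k'}=m$ should pin $w(t)$ strictly between $m$ and $M_k$ (it is at most $M_{k'}=M_k$ but cannot equal it, since $M_k$ lives at a position $\le k+1<t$, and it is at least $m_k=m$ but cannot equal it, since $m$ lives at a position $\ge k'+2>t$). Then the positions $w^{-1}(N)<w^{-1}(M_k)<t<w^{-1}(m)$ of $w$ carry the strictly decreasing values $N>M_k>w(t)>m$, which is an $N$-occurrence of $4321$.

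I expect the only real difficulty to be bookkeeping: moving the quantities $M_k$, $m_k$, $\ol w(k)$ (which are defined for $\ol w$) back to positions and values of $w$ across the shift at $w^{-1}(N)$, and then verifying the chain of position inequalities. The conceptual content is just that a collision is possible only when $M_k=M_{k'}$, and this forced equality ``traps'' an intermediate, strictly decreasing value among the positions between $k+1$ and $k'+1$, upgrading the $321$ to a $4321$.
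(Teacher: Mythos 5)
Your proof is correct and follows the same overall plan as the paper: rule out the mixed-type cases by injectivity and the position dictionary, reduce to both indices being of type I, and then manufacture a $4321$ by trapping a value strictly between $m$ and $M_k$. The details are executed a bit differently. The paper handles the mixed-type case by directly exhibiting the quadruple $\{N,\ol w(k),\ol w(k'),m\}$ as a $4321$, whereas you show that case is actually impossible (both sub-cases lead to position contradictions), which is cleaner and less likely to mislead a reader. In the both-type-I case, the paper specifically uses $\ol w(k')$ as the third letter of the $4321$ and needs a slightly awkward ``we can assume $M\notin\{\ol w(k),\ol w(k')\}$'' step, while you observe that \emph{any} position $t\in\{k+2,\dots,k'+1\}$ of $w$ yields a value squeezed strictly between $m$ and $M_k$; taking $t=k'+1$ recovers the paper's choice, so your argument subsumes it and sidesteps the extra case distinction. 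The bookkeeping across the one-cell shift at $w^{-1}(N)$, which you flag as the main hazard, is handled correctly throughout.
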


\begin{proof}
Suppose that there exist $k, k' \in \repeat(w)$, with $k < k'$, such that $\pattern_k(w) = \pattern_{k'}(w)$.  Proposition~\ref{prop:overcounting 3412} implies that these coincident patterns must be $N$-occurrences of $321$ in $w$.

These coincident $\pattern_k(w)$ and $\pattern_{k'}(w)$ cannot both be of type II as in Definition~\ref{defn:patterns}, because that would mean that $\ol{w}(k) = \ol{w}(k')$, and so $k = k'$.

Now suppose that the patterns have different types.  Thus $\ol{w}(k) = M_{k'}$ and $m_k = m_{k'}$.  Then $\{N, \ol{w}(k) = M_{k'}, \ol{w}(k'), m_k = m_{k'}\}$ forms an $N$-occurrence of $4321$ in $w$.  Note also in this case that we must have $M_k = \ol{w}(k) = M_{k'}$, since otherwise $M_k$ would lie to the left of $\ol{w}(k) = M_{k'}$, and be greater than $\ol{w}(k)$ by definition, which would contradict the maximality of $M_{k'}$.

It remains to consider the case when both patterns are of type I, and so $(M_k, m_k) = (M_{k'}, m_{k'}) = (M,m)$.  We can assume that $M \not\in \{\ol{w}(k), \ol{w}(k')\}$ because that case was already addressed.  Then the one-line notation of $\ol{w}$, and hence of $w$, looks like
$$\cdots \ M \ \cdots \ \ol{w}(k) \ \cdots \ \ol{w}(k') \ \cdots \ m \ \cdots.$$
Consider where $N$ lies in relation to the values $\{M,\ol{w}(k),\ol{w}(k'),m\}$.  Because both patterns have type I, we must have that $w^{-1}(N) < w^{-1}(M)$, and so $N$ lies to the left of $M$.  The definitions of $M$ and $m$ require that $M > \ol{w}(k), \ol{w}(k')$, and $m < \ol{w}(k')$.  Thus the letters $\{N,M, \ol{w}(k'), m\}$ form an $N$-occurrence of $4321$.
\end{proof}

\begin{scor}\label{cor:overcounting}
If $w$ has no $N$-occurrence of $4321$, then 
$$|\repeat(w)| \le \patt_N(w).$$
\end{scor}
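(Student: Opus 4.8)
The plan is to exhibit an injection from $\repeat(w)$ into the set of $N$-occurrences of $321$ or $3412$ in $w$, using the hypothesis that $w$ has no $N$-occurrence of $4321$; since the cardinality of that target set is exactly $\patt_N(w)$, the inequality $|\repeat(w)| \le \patt_N(w)$ follows at once. The candidate injection is the assignment $k \mapsto \pattern_k(w)$ from Definition~\ref{defn:patterns}.

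First I would note that this assignment is legitimate: by the remark immediately following Definition~\ref{defn:patterns}, for every $k \in \repeat(w)$ the object $\pattern_k(w)$ is defined and is always an $N$-occurrence of $321$ or of $3412$ in $w$ (this uses $M_k > m_k$, which holds because $k \in \repeat(w)$, via Lemma~\ref{lem:support implications}). Hence $k \mapsto \pattern_k(w)$ genuinely maps $\repeat(w)$ into the set whose size we want to bound.

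Next I would verify injectivity. Suppose $k \neq k'$ both lie in $\repeat(w)$ and $\pattern_k(w) = \pattern_{k'}(w)$. By Proposition~\ref{prop:overcounting 3412}, this common pattern cannot be an $N$-occurrence of $3412$, so it is an $N$-occurrence of $321$; but then Proposition~\ref{prop:overcounting M and m} forces $w$ to contain an $N$-occurrence of $4321$, contradicting the hypothesis. Therefore the map is injective. Consequently $|\repeat(w)|$ is at most the number of distinct $N$-occurrences of $321$ or $3412$ in $w$; partitioning by pattern type (an occurrence of $321$ and one of $3412$ can never coincide, having different lengths) this count is $\p_N(w) + \q_N(w) = \patt_N(w)$, which gives the desired bound.

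I do not anticipate any real obstacle in this argument — it is a short bookkeeping step — since all of the substantive content has already been absorbed into Propositions~\ref{prop:overcounting 3412} and~\ref{prop:overcounting M and m}. The only point requiring mild care is to keep the notion of "the same $N$-occurrence" consistent with its use in those two propositions, so that injectivity of $k \mapsto \pattern_k(w)$ translates directly into a lower bound on $\patt_N(w)$.
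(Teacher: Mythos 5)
Your proof is correct and follows exactly the paper's approach: it establishes that $k \mapsto \pattern_k(w)$ is an injection from $\repeat(w)$ into the $N$-occurrences of $321$ or $3412$ by combining Propositions~\ref{prop:overcounting 3412} and~\ref{prop:overcounting M and m} under the no-$4321$ hypothesis. Your write-up merely spells out the bookkeeping that the paper leaves implicit.
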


\begin{proof}
This follows from Propositions~\ref{prop:overcounting 3412} and~\ref{prop:overcounting M and m}, because there do not exist distinct $k, k' \in \repeat(w)$ with $\pattern_k(w)$ equalling $\pattern_{k'}(w)$.
\end{proof}

Therefore, by Corollary~\ref{cor:overcounting}, the procedure for assigning to each $k \in \repeat(w)$ an $N$-occurrence of either $321$ or $3412$ is injective if $w$ has no $N$-occurrence of $4321$.  We must now consider what happens to this assignment when $w$ does have such a pattern.

\begin{sprop}\label{prop:overcounting with 4321}
Suppose that $w$ has an $N$-occurrence of $4321$.
\begin{enumerate}\renewcommand{\labelenumi}{(\alph{enumi})}
\item If there exist distinct $k, k' \in \repeat(w)$ and $\pattern_k(w) = \pattern_{k'}(w)$, then there are two other $N$-occurrences $\pattern_{k'}^+(w)$ and $\pattern_{k'}^-(w)$ of $321$ in $w$, which are not equal to $\pattern_j(w)$ for any $j$.  (Let such a $k'$ be called ``duplicating.'')
\item Let $i$ and $j$ both be duplicating.  If $i \neq j$, then $\{\pattern_i^+(w), \pattern_i^-(w)\} \cap \{\pattern_{j}^+(w), \pattern_{j}^-(w)\} = \emptyset$.
\end{enumerate}
\end{sprop}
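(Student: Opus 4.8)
The plan is to attach to each duplicating index two explicit $321$-occurrences and then, by comparing underlying value sets, verify that they are new. Throughout I abbreviate $M_k = M_k(\ol{w})$ and $m_k = m_k(\ol{w})$, and I begin by unpacking a collision. Suppose $\pattern_k(w) = \pattern_{k'}(w)$ with $k \neq k'$ in $\repeat(w)$; after relabelling I may assume $k < k'$, so that it is the larger index $k'$ that I call duplicating. As in the proof of Proposition~\ref{prop:overcounting M and m}, the coincidence forces both $\pattern_k(w)$ and $\pattern_{k'}(w)$ to be of type~I in Definition~\ref{defn:patterns}, with $M_k = M_{k'}$ and $m_k = m_{k'}$. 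Writing $M = M_{k'}$ and $m = m_{k'}$, I record for repeated later use the inequalities $m < \ol{w}(k') < M$ and $w^{-1}(N) < w^{-1}(M)$: indeed $\ol{w}(k') < M$ because $M = M_k$ occurs at a position $\le k < k'$ and hence is not the value $\ol{w}(k')$; $\ol{w}(k') > m$ because $m = m_k$ is the minimum over the positions $> k$, a set containing $k'$, while $m = \ol{w}(k')$ would prevent $m_{k'}$ from equalling $m$; and $w^{-1}(N) < w^{-1}(M)$ is exactly the type~I condition for $k'$.

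For part~(a) I propose to put
$$\pattern_{k'}^+(w) = \{N, M_{k'}, \ol{w}(k')\} \qquad\text{and}\qquad \pattern_{k'}^-(w) = \{N, \ol{w}(k'), m_{k'}\}.$$
Each is an $N$-occurrence of $321$: the values form a decreasing chain $N > M_{k'} > \ol{w}(k') > m_{k'}$, and the positions fall in the right order because $N$ precedes $M_{k'}$ (type~I), $M_{k'}$ is attained strictly before position $k'$ (as $M_{k'} \neq \ol{w}(k')$), and $m_{k'}$ is attained strictly after position $k'$, with the passage from $\ol{w}$ to $w$ preserving these relative orders. They are distinct from one another because $M_{k'} > m_{k'}$ by Lemma~\ref{lem:support implications}. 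The substantive step is then to show that neither equals $\pattern_j(w)$ for any $j \in \repeat(w)$. A $j$ with $\pattern_j(w)$ of type~III produces a $3412$ and so is excluded on size grounds, so I split into the cases $\pattern_j(w)$ of type~I and of type~II; in each I equate the two three-element sets, pair their entries by size, and derive a contradiction with $m < \ol{w}(k') < M$ — the recurring pivots being that $\ol{w}(k')$ cannot appear among the positions $> k'$ and that the maximum or minimum defining $M_j$ or $m_j$ would have to be attained inside the wrong block. I expect this case analysis to be the main obstacle: there are several subcases, and in each one must keep track of whether $j$ lies to the left or to the right of $k'$, and translate positions carefully between $w$ and $\ol{w}$, before the contradiction surfaces.

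For part~(b), let $i$ and $j$ both be duplicating with $i \neq j$, each carrying the construction above relative to its own collision; applying the inequalities of the setup to $i$ and to $j$ gives $M_i > \ol{w}(i) > m_i$ and $M_j > \ol{w}(j) > m_j$. I compare the four sets $\pattern_i^+(w)$, $\pattern_i^-(w)$, $\pattern_j^+(w)$, $\pattern_j^-(w)$ pairwise. The matched-type comparisons $\pattern_i^+(w) = \pattern_j^+(w)$ and $\pattern_i^-(w) = \pattern_j^-(w)$ each collapse, after discarding the common entry $N$ and pairing the remaining two entries by size, to $\ol{w}(i) = \ol{w}(j)$, hence $i = j$. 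For the mixed comparison $\pattern_i^+(w) = \pattern_j^-(w)$, the same size-pairing gives $M_i = \ol{w}(j)$ and $\ol{w}(i) = m_j$; the first equation forces $j < i$ (a maximum over the block $1, \ldots, i$ is not attained outside that block, and $M_i > \ol{w}(i)$ excludes $j = i$), whereupon $\ol{w}(i) = m_j$ exhibits $\ol{w}(i)$ as the minimum over the positions $> j$, so $m_i \ge m_j = \ol{w}(i)$, contradicting the inequality $m_i < \ol{w}(i)$ recorded for the duplicating index $i$. The comparison $\pattern_i^-(w) = \pattern_j^+(w)$ is identical with the roles of $i$ and $j$ interchanged. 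Since none of the four equalities can hold, $\{\pattern_i^+(w), \pattern_i^-(w)\}$ and $\{\pattern_j^+(w), \pattern_j^-(w)\}$ are disjoint.
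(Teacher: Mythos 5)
Your part~(a) is incomplete at the decisive point. You correctly derive $m < \ol{w}(k') < M$ and $w^{-1}(N) < w^{-1}(M)$, correctly propose $\pattern_{k'}^+(w) = \{N, M, \ol{w}(k')\}$ and $\pattern_{k'}^-(w) = \{N, \ol{w}(k'), m\}$, and correctly verify that these are two distinct $N$-occurrences of $321$. But the proposition's real content is that neither of these equals $\pattern_j(w)$ for any $j$, and there you only describe a plan, flagging it as ``the main obstacle'' without carrying it out; a proof cannot leave its central step as a sketch. The feared case analysis in fact evaporates once you notice that every $\pattern_j(w)$ of type~I or type~II has $m_j$ as its \emph{smallest} entry, while a type~I pattern has $M_j$ as its middle entry. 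Since the letter $m$, with $m < \ol{w}(k')$, lies to the right of $\ol{w}(k')$, that entry is not a right-to-left minimum of $\ol{w}$ and hence never equals any $m_j$; as $\ol{w}(k')$ is the smallest entry of $\pattern_{k'}^+(w)$, this at once rules out $\pattern_{k'}^+(w) = \pattern_j(w)$ for every $j$. Dually, $M > \ol{w}(k')$ lies to the left of $\ol{w}(k')$ (at a position $< k'$), so $\ol{w}(k')$ is not a left-to-right maximum and never equals any $M_j$; as it is the middle entry of $\pattern_{k'}^-(w)$, the only remaining possibility is a type~II $\pattern_j(w)$ with $\ol{w}(j) = \ol{w}(k')$, which forces $j = k'$, but $\pattern_{k'}(w)$ was already established to be of type~I. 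So no $j$ works, in a few lines rather than a morass of subcases; the ``pair entries by size'' instinct you record is exactly the observation to push on.

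Part~(b) is complete, correct, and essentially the paper's own argument: the matched $\pm$ comparisons force $\ol{w}(i) = \ol{w}(j)$, and the mixed comparison $M_i = \ol{w}(j)$, $\ol{w}(i) = m_j$ forces $j < i$ and then $m_j \le m_i < \ol{w}(i)$, a contradiction.
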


\begin{proof}
First we will prove statement (a).  Suppose that there are such $k<k'$.  Then we know from Proposition~\ref{prop:overcounting M and m} that $(M_k,m_k) = (M_{k'},m_{k'}) = (M,m)$, and $\pattern_k(w) = \pattern_{k'}(w) = \{N,M,m\}$.  Also, we know that the one-line notation of $w$ looks like
$$\cdots \ N \ \cdots \ M \ \cdots \ \ol{w}(k) \ \cdots \ \ol{w}(k') \ \cdots \ m \ \cdots,$$
where $M$ and $\ol{w}(k)$ could possibly be equal.  Because $M = M_{k'}$, we must have $M > \ol{w}(k')$.  Also, because $m = m_{k}$, we must have $m < \ol{w}(k')$.  Thus
\begin{equation}\label{eqn:+ and - patterns}
\pattern_{k'}^+(w) = \{N > M > \ol{w}(k')\} \text{ and } \pattern_{k'}^-(w) = \{N > \ol{w}(k') > m\}
\end{equation}
are both $N$-occurrences of $321$ in $w$.

Note that $\pattern_{k'}^+(w)$ is not equal to $\pattern_j(w)$ for any $j$, because $\ol{w}(k')$ is not equal to $m_j$ for any $j$: there exists a letter (for example, $m$) to the right of $\ol{w}(k')$ which is less than $\ol{w}(k')$.  Similarly, $\pattern_{k'}^-(w) \neq \pattern_{j}(w)$ for any $j$, because $\ol{w}(k')$ cannot equal $M_j$.

The proof of statement (b) is similar to the previous argument.  Suppose that $i$ and $j$ are duplicating, with $i \neq j$.  If $\pattern_i^+(w) = \pattern_j^+(w)$ or $\pattern_i^-(w) = \pattern_j^-(w)$, as defined in equation~\eqref{eqn:+ and - patterns}, then $\overline{w}(i) = \overline{w}(j)$.  This would mean that $i=j$, which is a contradiction.  Thus it remains to consider the situation $\pattern_i^+(w) = \pattern_j^-(w)$.  Then $M_i = \overline{w}(j)$ and $\overline{w}(i) = m_j$, and $i>j$.  Once again, we cannot have $\overline{w}(i) = m_j$, because the letter $m_i$ appears to the right of $\overline{w}(i)$ and is less than $\overline{w}(i)$.  This completes the proof.
\end{proof}

\begin{scor}\label{cor:strict overcounting}
If $w$ has an $N$-occurrence of $4321$, then
$$|\repeat(w)| < \patt_N(w).$$
\end{scor}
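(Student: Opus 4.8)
The plan is to combine the three overcounting results just established (Propositions~\ref{prop:overcounting 3412}, \ref{prop:overcounting M and m}, and~\ref{prop:overcounting with 4321}) with one small extra observation that handles the case in which the assignment $k\mapsto\pattern_k(w)$ happens to be injective. Throughout, fix an $N$-occurrence of $4321$ in $w$, say with values $N>a>b>c$ occurring in this order in the one-line notation of $w$; note that $N$ is the leftmost of the four, so $a$, $b$, $c$ all lie to the right of $N$ in $w$.

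The first step is to pin down the size of the image of $\pattern$. Partitioning $\repeat(w)$ into classes according to the value of $\pattern_k(w)$, a class with $m$ elements contributes exactly $m-1$ duplicating indices in the sense of Proposition~\ref{prop:overcounting with 4321}; so, writing $D$ for the set of duplicating indices, the number of distinct patterns among $\{\pattern_k(w):k\in\repeat(w)\}$ is $|\repeat(w)|-|D|$. By Propositions~\ref{prop:overcounting 3412} and~\ref{prop:overcounting with 4321}, these distinct patterns together with the patterns $\pattern_{k'}^+(w)$ and $\pattern_{k'}^-(w)$ for $k'\in D$ comprise $(|\repeat(w)|-|D|)+2|D|=|\repeat(w)|+|D|$ distinct $N$-occurrences of $321$ or $3412$ in $w$: the patterns $\pattern_{k'}^\pm(w)$ are never of the form $\pattern_j(w)$ and are pairwise distinct across distinct $k'$ by part~(b), while $\pattern_{k'}^+(w)\neq\pattern_{k'}^-(w)$ because $M_{k'}(\ol{w})\neq m_{k'}(\ol{w})$. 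Hence $\patt_N(w)\ge|\repeat(w)|+|D|$, which already proves the corollary whenever $D\neq\emptyset$.

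It then remains to treat the case $D=\emptyset$, where $\pattern$ is injective and the bound above only recovers Corollary~\ref{cor:overcounting}. The extra observation is that the $N$-occurrence $\{N,a,b\}$ of $321$ in $w$ is never equal to $\pattern_k(w)$ for any $k\in\repeat(w)$: a pattern of type~III in Definition~\ref{defn:patterns} has four letters, while for types~I and~II the smallest letter of $\pattern_k(w)$ is $m_k(\ol{w})$, and $b$ cannot be $m_k(\ol{w})$ for any $k$ since the letter $c<b$ lies to the right of $b$ in $\ol{w}$. Thus $\{N,a,b\}$ is an $N$-occurrence of $321$ outside the image of $\pattern$, which now has size $|\repeat(w)|$, so again $\patt_N(w)\ge|\repeat(w)|+1$, completing the argument.

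I expect the one delicate point to be the bookkeeping of the middle paragraph: checking that each $\pattern$-class of size $m$ really contributes $m-1$ duplicating indices, and that the families $\{\pattern_k(w)\}$ and $\{\pattern_{k'}^\pm(w):k'\in D\}$ are disjoint with no repetitions within them. This is all essentially packaged by the preceding propositions, so it should be routine. The genuinely necessary idea is the case split on whether $D$ is empty --- an injection need not miss anything by itself --- and the role of the $4321$ hypothesis in the injective case is precisely to furnish the uncovered occurrence $\{N,a,b\}$.
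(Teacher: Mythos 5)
Your proof is correct, and it takes a route that is slightly different from --- and in one respect more cautious than --- the paper's. The paper partitions $\repeat(w)$ by the pair $(M_k,m_k)$ rather than by the value of $\pattern_k(w)$, forms from each class $S_i$ a set $\pattern_{S_i}(w)$ containing $\pattern$ of the smallest index together with $\pattern^\pm$ of the rest, and finishes by asserting that an $N$-occurrence of $4321$ forces some $(M_k,m_k)$-class to have at least two elements. Your version instead partitions by $\pattern_k(w)$ itself, obtains $\patt_N(w)\ge|\repeat(w)|+|D|$ from Proposition~\ref{prop:overcounting with 4321}, and then explicitly treats the residual case $D=\emptyset$: you observe that for a fixed $N$-occurrence $\{N>a>b>c\}$ of $4321$, the triple $\{N,a,b\}$ is an $N$-occurrence of $321$ that can never equal any $\pattern_k(w)$, since $b$ cannot be $m_k(\ol w)$ (the entry $c<b$ sits to its right). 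This is exactly the mechanism used inside Proposition~\ref{prop:321 must use m_k}, redeployed here. The benefit of your version is that it never needs the auxiliary claim that a $4321$ $N$-occurrence produces a non-singleton $(M_k,m_k)$-class --- a claim the paper states but does not separately prove --- and it uses only the duplicating indices for which $\pattern_{k'}^\pm(w)$ are guaranteed to be well-defined valid occurrences by Proposition~\ref{prop:overcounting with 4321}(a). All the bookkeeping in your middle paragraph (each $\pattern$-class of size $m$ yields $m-1$ duplicating indices; the families $\{\pattern_k(w)\}$, $\{\pattern_{k'}^\pm(w):k'\in D\}$ are mutually disjoint with no internal repetitions) is indeed delivered by Propositions~\ref{prop:overcounting 3412} and~\ref{prop:overcounting with 4321} together with the observation that $\pattern_{k'}^+(w)=\pattern_{k'}^-(w)$ would force $M_{k'}(\ol w)=m_{k'}(\ol w)$.
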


\begin{proof}
Partition the set $\repeat(w)$ into sets $S_1, S_2, \ldots, S_t$ so that $(M_k,m_k) = (M(i),m(i))$ for each $k \in S_i$.  Suppose $S_i = \{k_{i_1} < k_{i_2} < \cdots < k_{i_{|S_i|}}\}$, and define
$$\pattern_{S_i}(w) = \left\{\pattern_{k_{i_1}}(w), \pattern_{k_{i_2}}^+(w), \pattern_{k_{i_2}}^-(w), \ldots, \pattern_{k_{i_{|S_i|}}}^+(w), \pattern_{k_{i_{|S_i|}}}^-(w)\right\}.$$
Note that if $|S_i| = 1$, then $|\pattern_{S_i}(w)| = 1$.  Also, if $|S_i| > 1$, then $|\pattern_{S_i}(w)| = 2|S_i| - 1 > |S_i|$.  Moreover, the elements of $\pattern_{S_i}(w)$ are all $N$-occurrences of either $321$ or $3412$ in $w$.  Finally, by Proposition~\ref{prop:overcounting with 4321}(b), the sets $\{\pattern_{S_i}(w)\}$ are disjoint.

If $w$ has an $N$-occurrence of $4321$, then there exists some $S_i$ containing at least two elements.  Thus $|\repeat(w)| = |S_1| + |S_2| + \cdots + |S_t| < |\pattern_{S_1}(w)| + |\pattern_{S_2}(w)| + \cdots +  |\pattern_{S_t}(w)|\le \patt_N(w)$.
\end{proof}

Using the notation from the proof of Corollary~\ref{cor:strict overcounting}, we can also rewrite its result to say that the map
\begin{equation}\label{eqn:xi}
\xi_n: j \mapsto
\begin{cases}
\pattern_j(w) & \text{if } j \text{ is the minimal element in } S_i\text{, and}\\
\pattern_j^+(w) & \text{ otherwise.}
\end{cases}
\end{equation}
is an injection.

\subsection{Issues of undercounting}

We have now addressed the issue of whether the set $\{\pattern_k(w) : k \in \repeat(w)\}$ might overcount some $N$-occurrences of $321$ or of $3412$ (in fact, we have shown that only $321$-patterns may be overcounted).  We must now consider when this set might undercount these $N$-occurrences.  As we have seen in Proposition~\ref{prop:overcounting with 4321}, undercounting is certainly a possibility.  What we will show now is that if $w$ avoids the ten patterns in the set $\Phi$, then there is no undercounting, and thus the inequality of Corollary~\ref{cor:overcounting} is actually an equality.  

To examine potential undercounting, we must decide if and when an $N$-occurrence of $321$ or of $3412$ might not equal $\pattern_k(w)$ for some $k$.

\begin{sprop}\label{prop:321 must use m_k}
If any $N$-occurrence $\{N>a>b\}$ of $321$ in $w$ is such that $b \not\in \{m_k: k \in \repeat(w)\}$, then $w$ has an $N$-occurrence of $4321$.
\end{sprop}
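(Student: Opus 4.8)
The plan is to argue by contradiction: suppose we have an $N$-occurrence $\{N > a > b\}$ of $321$ in $w$ such that $b \notin \{m_k : k \in \repeat(w)\}$, and produce an $N$-occurrence of $4321$. Let $p = w^{-1}(a)$ and $q = w^{-1}(b)$, so that $w^{-1}(N) < p < q$ (we may assume the occurrence is chosen with $a$ immediately determined by its position, etc.). The key observation is that since $b$ sits in position $q$ and $b < a \le M_q(\ol w)$ while $b$ is small, we should have $s_q \in \supp(\ol w)$ by Lemma~\ref{lem:support implications}: indeed $M_q(\ol w) \ge a > b \ge m_q(\ol w)$ once we check $b$ contributes to the minimum on the right. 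First I would pin down which index $k$ to compare $b$ against: the natural candidate is $k = q$ (or the largest index at which $b$ is still ``on the right''), for which $m_k(\ol w) \le b$. Since $w^{-1}(N) < p < q = k$, we have $w^{-1}(N) \le k$, so $k \in \repeat(w)$ by Lemma~\ref{lem:when in new-rep}, provided $M_k(\ol w) > m_k(\ol w)$ — which holds because $a$ (lying in a position $\le k$ in $\ol w$) exceeds $b \ge m_k(\ol w)$.

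Now the hypothesis $b \notin \{m_k : k \in \repeat(w)\}$ forces $b \ne m_k(\ol w)$ for this $k \in \repeat(w)$, hence $m_k(\ol w) < b$ strictly. That means there is some position $r > k = q$ with $\ol w(r) = m_k(\ol w) < b$; translating back to $w$, there is a position (to the right of $q$) carrying a value strictly smaller than $b$. Call that value $c < b$. I would then assemble the four values $N, a, b, c$ in positions $w^{-1}(N) < p < q < w^{-1}(c)$ with $N > a > b > c$, which is precisely an $N$-occurrence of $4321$ in $w$. The one subtlety is bookkeeping between $w$ and $\ol w$: deleting the letter $N$ shifts positions after $w^{-1}(N)$, so I must be careful that ``position $\le k$ in $\ol w$'' corresponds correctly to positions in $w$, and that the value $a$ really does appear among $\ol w(1), \dots, \ol w(k)$ so that $M_k(\ol w) \ge a$. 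Since $a$ lies in $w$ at position $p$ with $w^{-1}(N) < p \le q = k+?$ — here I need to double-check the off-by-one: in $\ol w$, the value $a$ sits at position $p - 1$ (because $a$ comes after $N$), and $b$ sits at position $q - 1$, so the relevant index is $k = q - 1$, giving $M_{q-1}(\ol w) \ge a$ and $m_{q-1}(\ol w) \le b$. With $k = q - 1 \ge w^{-1}(N)$ (which holds since $p - 1 \ge w^{-1}(N)$ and $q > p$), everything goes through.

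The main obstacle I anticipate is exactly this index-shifting care: making sure the chosen $k$ satisfies simultaneously $w^{-1}(N) \le k$, $M_k(\ol w) \ge a$, and $m_k(\ol w) \le b$, so that $k \in \repeat(w)$ and the failure $b \ne m_k(\ol w)$ can be invoked. Once $k$ is correctly identified, extracting the fourth (smallest) letter $c$ witnessing $m_k(\ol w) < b$ and checking it lies strictly to the right of position $q$ in $w$ is routine. A secondary point to verify is that $c$ is genuinely a new value distinct from $N, a, b$ — but that is automatic since $c < b < a < N$. I would also remark that this result, combined with Corollary~\ref{cor:overcounting}, shows that when $w$ has no $N$-occurrence of $4321$, every $N$-occurrence of $321$ has its smallest letter of the form $m_k$ for some $k \in \repeat(w)$, which is the form of undercounting control needed downstream.
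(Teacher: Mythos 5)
Your strategy is the same as the paper's: identify the right index $k$, show $k \in \repeat(w)$ via Lemma~\ref{lem:when in new-rep}, invoke the hypothesis to get $m_k(\ol{w}) < b$ strictly, and read off a fourth value $c = m_k(\ol{w})$ to the right of $b$ that completes the $4321$-pattern. (The paper phrases this as a direct case split on whether some $c < b$ lies to the right of $b$; your contradiction framing is equivalent.) However, your final index choice is off by one, and this is not merely cosmetic. You settle on $k = q-1$, where $q = w^{-1}(b)$, and assert $m_{q-1}(\ol{w}) \le b$. But $b$ sits at position $q-1$ of $\ol{w}$, so $m_{q-1}(\ol{w}) = \min\{\ol{w}(q), \ol{w}(q+1), \ldots\}$ does \emph{not} include $b$ in the minimization; it could exceed $b$, and it does precisely in the borderline case where nothing smaller than $b$ lies to its right. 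That is exactly the case where your argument must produce the contradiction with $b \notin \{m_k\}$, so the gap is in the load-bearing step. The correct choice is $k = q - 2 = \ol{w}^{-1}(b) - 1$ (the paper's choice). Then $m_k(\ol{w}) \le \ol{w}(k+1) = \ol{w}(q-1) = b$ is automatic; $M_k(\ol{w}) \ge a$ because $a$ sits at position $p-1 \le q-2$; $w^{-1}(N) \le p-1 \le q-2 = k$; and so $k \in \repeat(w)$. Now the hypothesis gives $m_k < b$, the minimizer must occur at a position $> q-1$ in $\ol{w}$ (hence strictly right of $b$ in $w$), and $\{N,a,b,m_k\}$ is the desired $N$-occurrence of $4321$. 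With that one-line fix, your proposal coincides with the paper's proof.
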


\begin{proof}
Suppose there is an $N$-occurrence of $321$ in $w$ where $b \neq m_k$ for any $k$.  Then to the right of $b$ in the one-line notation $w$, there exists $c < b$, preventing $b$ from equalling any such $m_k$.  Thus $\{N>a>b>c\}$ is an $N$-occurrence of $4321$.  Now, suppose there is no such $c$, and set $k = \ol{w}^{-1}(b) - 1$.  Then $b = m_k$, and, since $a > b$ appears to the left of $b$, we must have $M_k \ge a > b=m_k$.  Therefore, by Lemma~\ref{lem:when in new-rep}, $k \in \repeat(w)$.
\end{proof}

\begin{sprop}\label{prop:321 must use M_k or ol{w}(k)}
Suppose $w$ is $4321$-avoiding.  If any $N$-occurrence $\{N> a > b\}$ of $321$ in $w$ is such that there exists no $k \in \repeat(w)$ with $(a,b) \in \{(M_k,m_k), (\ol{w}_k,m_k)\}$, then $w$ has an $N$-occurrence of at least one of the patterns $\{45312, 53412\}$.
\end{sprop}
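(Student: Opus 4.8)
The plan is to take a bad $N$-occurrence $\{N>a>b\}$ of $321$ and extract, from the one-line notation of $w$, two values below $b$ and one value strictly between $a$ and $N$; assembled with $N$, $a$, and $b$ these will exhibit an $N$-occurrence of $45312$ or of $53412$, while any configuration that would instead produce $4321$ contradicts the standing hypothesis that $w$ avoids $4321$.

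First I would set $a'=\ol{w}^{-1}(a)$ and $\kappa=\ol{w}^{-1}(b)-1$. Since $w$ avoids $4321$, Proposition~\ref{prop:321 must use m_k} gives $b=m_\kappa$ with $\kappa\in\repeat(w)$, so no value below $b$ lies to the right of $b$ in $w$. From $\ol{w}(a')=a$, $m_{a'}\le b$, and $w^{-1}(N)\le a'$ (as $N$ precedes $a$) one gets $a'\in\repeat(w)$; since $(a,b)$ is not $(\ol{w}_{a'},m_{a'})=(a,m_{a'})$, we must have $m_{a'}<b$, so some value $c<b$ occurs to the right of $a$, and $4321$-avoidance places $c$ to the left of $b$. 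Likewise $M_\kappa\ge a$, and since $(a,b)$ is not $(M_\kappa,m_\kappa)=(M_\kappa,b)$ we get $M_\kappa>a$, so $v:=M_\kappa$ satisfies $a<v<N$ and lies to the left of $b$ (indeed to the right of $a$). Thus $w$ reads $\cdots N\cdots a\cdots c\cdots b\cdots$ with $N>a>b>c$.

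The argument then splits on whether some value $u$ with $a<u<N$ lies to the left of $c$ in $w$. If so, then according as $u$ precedes $N$, lies between $N$ and $a$, or lies between $a$ and $c$, the entries $\{u,N,a,c,b\}$, the entries $\{N,u,a,c\}$, or the entries $\{N,a,u,c,b\}$ form respectively an $N$-occurrence of $45312$, an $N$-occurrence of $4321$ (impossible), or an $N$-occurrence of $53412$. If no such $u$ exists, then in $\ol{w}$ every value exceeding $a$ lies to the right of $c$; in particular $a$ is a left-to-right maximum of $\ol{w}$, and since $v>a$ there is a first value $V>a$ to the right of $a$, which (as $v$ precedes $b$) still precedes $b$. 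Letting $k^\ast$ be the largest index with $M_{k^\ast}=a$, one verifies $k^\ast\in\repeat(w)$ and $m_{k^\ast}\le b$; since $(a,b)$ is not of the form $(M_k,m_k)$ with $k\in\repeat(w)$, necessarily $m_{k^\ast}<b$, and the value realising this minimum is a value $c^\dagger<b$ which, not being $V$, lies strictly to the right of $V$, and which by $4321$-avoidance lies to the left of $b$. Then $\{N,a,V,c^\dagger,b\}$ is an $N$-occurrence of $53412$, as desired.

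I expect the delicate step to be the last one: showing that when no value in $(a,N)$ precedes $c$, the bad occurrence genuinely forces the second small value $c^\dagger$ to appear \emph{after} the tall value $V$ and before $b$ — which is precisely what distinguishes a copy of $53412$ from a copy of, say, $53142$, a pattern that need not lie in $\Phi$. This hinges on the chain $a'\le k^\ast<\ol{w}^{-1}(b)$ of index inequalities together with repeated use of $4321$-avoidance to prevent a value below $b$ from drifting past $b$.
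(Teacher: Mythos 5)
Your proof is correct, but it takes a genuinely different route from the paper's. The paper fixes one index $k$ with $b = m_k$ and case-splits on whether $a > M_k$ or $a < M_k$, then on whether $N$ sits left or right of $M_k$, invoking a ``minimal $k$'' device at the end to guarantee $\ol{w}(k) < M_k$ and build the $53412$. You instead work with two indices $a' = \ol{w}^{-1}(a)$ and $\kappa = \ol{w}^{-1}(b)-1$, use $m_{a'}<b$ to produce a small value $c$ between $a$ and $b$, use $M_\kappa>a$ to produce a tall value $v$, and branch on whether some $u\in(a,N)$ lies left of $c$. Your second branch then shows $a$ is a left-to-right maximum of $\ol{w}$ and builds $V$, $k^\ast$, $c^\dagger$ explicitly, which replaces the paper's minimality argument. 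Both strategies produce exactly the two patterns $45312$ and $53412$ from the same raw ingredients ($m_k$'s and $M_k$'s), but the case structure and the auxiliary indices are organized quite differently, and your construction of the $53412$ in the ``no such $u$'' branch is a real departure from the paper.

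One small slip worth flagging: in the setup you assert that $v = M_\kappa$ lies ``(indeed to the right of $a$).'' This is not true in general. For example, take $w = 6714523$ with $N=7$ and the $321$-occurrence $\{7,4,3\}$, so $a=4$, $b=3$, $\ol{w}=614523$, $\kappa=5$, and $M_\kappa = 6$ sits in position $1$ of $\ol{w}$, to the \emph{left} of $a$; meanwhile the hypothesis of the proposition is satisfied (one checks $(4,3)$ is never $(M_k,m_k)$ or $(\ol{w}(k),m_k)$ for $k\in\repeat(w)=\{2,3,4,5\}$). What is actually true is only that $v$ lies left of $b$, plus, by $4321$-avoidance, that $v$ cannot sit strictly between $N$ and $a$. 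Fortunately this parenthetical is never used: your first branch already allows $u$ to precede $N$, and in the second branch the statement that $v$ lies right of $a$ is re-derived from the ``no such $u$'' hypothesis. So the slip is harmless, but the parenthetical should be removed or weakened to ``$v$ is not strictly between $N$ and $a$.''
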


\begin{proof}
By Proposition~\ref{prop:321 must use m_k}, we know that $b = m_k$ for at least one value of $k \in \repeat(w)$.  Suppose that $a \not\in \{M_k,\ol{w}(k)\}$.

Suppose $a > M_k$.  Then, by maximality of $M_k$, this $a$ must appear to the right of both $M_k$ and $\ol{w}(k)$ in the one-line notation of $w$.  But then, setting $k' = \ol{w}^{-1}(a)$, we must have $m_{k'} = m_k = b$, and so $(a,b) = (\ol{w}(k'), m_{k'})$.  By definition, $k' > k$, and so $m_{k'} = m_k < k+1 < k'+1$, where the first inequality is because $k \in \repeat(w)$.  Therefore $k' \in \repeat(w)$ as well, by Lemma~\ref{lem:when in new-rep}.

Now suppose that $a < M_k$.  If $w^{-1}(N) > w^{-1}(M_k)$, then the one-line notation of $w$ looks like
$$\cdots \ M_k \ \cdots \ N \ \underbrace{\cdots\cdots}_{<a} \ a \underbrace{\cdots\cdots}_{>a \text{ or } < m_k} b=m_k \ \cdots,$$
because $w$ is $4321$-avoiding.  If all values appearing between $a$ and $b$ in $w$ are larger than $a$, then we can set $k' = \ol{w}^{-1}(a)$, and we have $(M_{k'},m_{k'}) = (M_k,m_k)$, and again $k' \in \repeat(w)$ by Lemma~\ref{lem:when in new-rep}.  Thus suppose that there is some value $c$ in this portion of $w$ with $c < m_k$.  Then $\{M_k, N, a, c, b\}$ is an $N$-occurrence of $45312$ in $w$.

Finally, suppose that $w^{-1}(N) < w^{-1}(M_k)$, where $k$ is minimal with this property.  So the one-line notation of $w$ looks like
$$\cdots \ N \ \cdots \ a \ \cdots \ M_k \ \cdots \ b=m_k\ \cdots,$$
again because $w$ is $4321$-avoiding.  If $M_k = \overline{w}(k)$, then the value of $k$ was not chosen to be minimal, a contradiction.  Thus the entry $\overline{w}(k)$ must lie strictly between $M_k$ and $b=m_k$.  By definition, $\overline{w}(k) < M_k$.  Moreover, to avoid the pattern $4321$, we must have $\overline{w}(k) < b=m_k$.  Thus $\{N, a, M_k, \overline{w}(k), b=m_k\}$ forms a $53412$-pattern in $w$.
\end{proof}

Propositions~\ref{prop:321 must use m_k} and~\ref{prop:321 must use M_k or ol{w}(k)} now imply the following result.

\begin{scor}\label{cor:321 patterns}
If $w$ has no $N$-occurrences of the patterns $\{4321, 45312, 53412\}$, then every $N$-occurrence of $321$ in $w$ is equal to $\pattern_k(w)$ for some $k$.
\end{scor}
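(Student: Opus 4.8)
The plan is to take an arbitrary $N$-occurrence $\{N > a > b\}$ of $321$ in $w$ and show that, under the stated hypothesis, it literally equals $\pattern_k(w)$ for a suitable $k \in \repeat(w)$. Since $w$ has no $N$-occurrence of $4321$, $45312$, or $53412$, the contrapositives of Propositions~\ref{prop:321 must use m_k} and~\ref{prop:321 must use M_k or ol{w}(k)} together furnish a $k \in \repeat(w)$ with $(a,b) = (M_k, m_k)$ or $(a,b) = (\ol{w}(k), m_k)$. It then remains only to check, in each of these two cases, that this $k$ falls into the matching clause of Definition~\ref{defn:patterns} and that the pattern produced there is exactly $\{N, a, b\}$.

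The first case, $(a,b) = (M_k, m_k)$, is immediate: because $\{N,a,b\}$ is an occurrence of $321$, the entry $N$ precedes $a = M_k$ in the one-line notation of $w$, i.e.\ $w^{-1}(N) < w^{-1}(M_k)$, which is precisely clause~I of Definition~\ref{defn:patterns}; hence $\pattern_k(w) = \{N, M_k, m_k\} = \{N,a,b\}$.

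For the second case, $(a,b) = (\ol{w}(k), m_k)$, we may assume $a \neq M_k$ (otherwise we are back in the first case), so $a = \ol{w}(k) < M_k$. I would first record the left-to-right order of $M_k$, $\ol{w}(k)$, and $m_k$ in $w$. Since $\ol{w}(k) < M_k$, the value $M_k$ occurs among $\ol{w}(1), \ldots, \ol{w}(k-1)$, hence to the left of $\ol{w}(k)$; and $m_k = \min\{\ol{w}(j) : j > k\}$ occurs to the right of $\ol{w}(k)$. Deleting the letter $N$ to pass from $w$ to $\ol{w}$ preserves all pairwise relative orders among the remaining entries, so in $w$ itself we have $M_k$, then $\ol{w}(k)$, then $m_k$, with $M_k > \ol{w}(k) > m_k$. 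Now I claim $w^{-1}(N) > w^{-1}(M_k)$: were $N$ to precede $M_k$, the four entries $N, M_k, \ol{w}(k), m_k$ would form an $N$-occurrence of $4321$ in $w$, contradicting the hypothesis. Therefore $w^{-1}(N) > w^{-1}(M_k)$, and since $\ol{w}(k) = a > b = m_k$ this is clause~II of Definition~\ref{defn:patterns}, giving $\pattern_k(w) = \{N, \ol{w}(k), m_k\} = \{N,a,b\}$.

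I expect the only delicate point to be the position bookkeeping in the second case: verifying that $M_k$, $\ol{w}(k)$, $m_k$ appear in that order in $w$, and that the failure of $w^{-1}(N) > w^{-1}(M_k)$ genuinely produces a four-term decreasing $N$-occurrence rather than something shorter. Both hinge on the single fact that forming $\ol{w}$ from $w$ by removing the entry $N$ leaves every pairwise relative order among the other entries intact, together with Lemma~\ref{lem:when in new-rep}, which guarantees $M_k > m_k$ for $k \in \repeat(w)$ and so makes clauses~I and~II of Definition~\ref{defn:patterns} available in the first place.
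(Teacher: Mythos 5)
Your proof is correct and follows the same route as the paper, which derives the corollary directly from Propositions~\ref{prop:321 must use m_k} and~\ref{prop:321 must use M_k or ol{w}(k)}. You go further than the paper's one-line deduction by explicitly checking that the $k$ produced by those propositions lands in the right clause of Definition~\ref{defn:patterns} and yields exactly $\{N,a,b\}$ (in particular, ruling out clause~I in the second case via a $4321$ contradiction), which is a detail the paper treats as immediate.
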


\begin{sprop}\label{prop:3412 must use m_k}
If any $N$-occurrence $\{a,N,b,c\}$ of $3412$ in $w$ is such that $c \not\in \{m_k : k \in \repeat(w)\}$, then $w$ has an $N$-occurrence of at least one of the patterns $\{45231, 45132\}$.
\end{sprop}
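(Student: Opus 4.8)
The plan is to mirror the argument of Proposition~\ref{prop:321 must use m_k}, now for a $3412$-pattern rather than a $321$-pattern. Fix an $N$-occurrence $\{a,N,b,c\}$ of $3412$ in $w$, so that $b < c < a < N$ and $w^{-1}(a) < w^{-1}(N) < w^{-1}(b) < w^{-1}(c)$, and suppose toward a contradiction of the conclusion that $c \neq m_k$ for every $k \in \repeat(w)$. The first step is to show that some entry of $w$ lying to the right of $c$ is strictly smaller than $c$. Suppose not. Since $c$ lies to the right of $N$, deleting $N$ shifts $c$ one place to the left, so set $k = \ol{w}^{-1}(c) - 1 = w^{-1}(c) - 2$. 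The entries of $\ol{w}$ in positions $> k$ consist of $c$ together with the entries to its right, all of which are $\ge c$ by our assumption, so $m_k = c$. From $w^{-1}(N) < w^{-1}(b) < w^{-1}(c)$ we get $w^{-1}(N) \le w^{-1}(c) - 2 = k$; and since $a$ lies to the left of $N$, its position in $\ol{w}$ equals $w^{-1}(a) < w^{-1}(N) \le k$, so $M_k \ge a > c = m_k$. By Lemma~\ref{lem:when in new-rep} this puts $k \in \repeat(w)$ with $m_k = c$, contradicting the hypothesis. Hence there is an entry $d$ of $w$ with $d < c$ and $w^{-1}(d) > w^{-1}(c)$.

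The second step is a short case analysis comparing $d$ with $b$. Since $d < c$ and $d \neq b$, either $d < b$ or $b < d < c$. In either case the five values $a, N, b, c, d$ occupy the increasing sequence of positions $w^{-1}(a) < w^{-1}(N) < w^{-1}(b) < w^{-1}(c) < w^{-1}(d)$, so I just read off their relative order. If $d < b$, then $d < b < c < a < N$ and the occurrence has pattern $45231$; if $b < d < c$, then $b < d < c < a < N$ and the occurrence has pattern $45132$. In both cases the letter $N$ appears and is the largest of the five values, so $w$ contains an $N$-occurrence of one of $\{45231, 45132\}$, as claimed.

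The only point requiring care is the bookkeeping between positions in $w$ and in $\ol{w}$ when locating $k$ and invoking Lemma~\ref{lem:when in new-rep} (and, implicitly, Lemma~\ref{lem:support implications} applied to $\ol{w}$); once that translation is set up correctly, everything reduces to direct size comparisons among $a, N, b, c, d$. I do not expect a genuine obstacle here: this is the $3412$-counterpart of Proposition~\ref{prop:321 must use m_k}, with the extra witness $d$ producing a five-letter pattern instead of a four-letter one.
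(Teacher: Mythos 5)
Your proof is correct and follows essentially the same route as the paper's: establish (by contradiction, using $k = \ol{w}^{-1}(c)-1$ and Lemma~\ref{lem:when in new-rep}) that some $d<c$ lies to the right of $c$, then read off the $45231$ or $45132$ pattern from $\{a,N,b,c,d\}$ according to whether $d<b$ or $b<d$. You are in fact a bit more careful than the paper's terse argument in explicitly verifying both conditions of Lemma~\ref{lem:when in new-rep}, namely that $w^{-1}(N) \le k$ as well as $M_k > m_k$.
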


\begin{proof}
Suppose there is such an $N$-occurrence of $3412$ in $w$.  This means that to the right of $c$ in the one-line notation of $w$, there exists a $d < c$, preventing $c$ from equalling any such $m_k$.  Thus $\{a, N, b, c, d\}$ is an $N$-occurrence of either $45231$ or of $45132$, depending on whether $b>d$ or $b<d$.  Now suppose that there is no such $d$, and set $k = \ol{w}^{-1}(c)-1$.  Then $c = m_k$, and, since $a > c$ appears to the left of $c$, we must have $M_k > m_k$.  Therefore, by Lemma~\ref{lem:when in new-rep}, $k \in \repeat(w)$.
\end{proof}

\begin{sprop}\label{prop:3412 must use M_k}
Suppose $w$ is $45231$- and $45132$-avoiding.  If any $N$-occurrence $\{a,N,b,c\}$ of $3412$ in $w$ is such that there exists no $k \in \repeat(w)$ with $(a,c) = (M_k,m_k)$, then $w$ has an $N$-occurrence of at least one of the patterns $\{43512, 34512, 35412\}$.
\end{sprop}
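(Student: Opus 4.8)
The plan is to follow the template of Proposition~\ref{prop:321 must use M_k or ol{w}(k)}, the analogue for $321$: produce a single ``large'' entry of $w$ lying among the first coordinates of the occurrence, and then read off a pattern from $\{43512,34512,35412\}$ according to where that entry sits. Fix the positions of the given occurrence, so that $w$ reads $\cdots a\cdots N\cdots b\cdots c\cdots$ at positions $p_a<p_N<p_b<p_c$ with $b<c<a<N$. The crucial preliminary choice is which index to work with: rather than $\ol{w}^{-1}(c)-1$, I would take $k:=p^{\ast}-1$, where $p^{\ast}$ is the \emph{largest} position with $p^{\ast}<p_c$ and $w(p^{\ast})<c$. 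Such a $p^{\ast}$ exists because $w(p_b)=b<c$, and in fact $p_b\le p^{\ast}$, so $p^{\ast}>p_N$. Write $v^{\ast}:=w(p^{\ast})<c$.

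Next I would record two facts about this $k$. First, $m_k(\ol{w})=c$: by maximality of $p^{\ast}$, every entry of $w$ strictly between positions $p^{\ast}$ and $p_c$ exceeds $c$, and no entry $e<c$ can occur to the right of $p_c$, for otherwise $\{a,N,b,c,e\}$ would be an $N$-occurrence of $45132$ (if $b<e$) or of $45231$ (if $e<b$), contradicting the hypothesis that $w$ avoids these patterns; since $w(p_c)=c$ itself belongs to the relevant suffix, its minimum equals $c$. Second, $k\in\repeat(w)$: indeed $k=p^{\ast}-1\ge p_b-1\ge w^{-1}(N)$, and $a$ occurs at $p_a<p^{\ast}$ with $a\neq N$, so $M_k(\ol{w})\ge a>c=m_k(\ol{w})$, and Lemma~\ref{lem:when in new-rep} applies. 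It is precisely in establishing $m_k(\ol{w})=c$ that the avoidance hypothesis of the proposition is used.

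Now suppose that no $k'\in\repeat(w)$ has $(a,c)=(M_{k'}(\ol{w}),m_{k'}(\ol{w}))$. Applied to our $k$ and combined with $m_k(\ol{w})=c$, this forces $M_k(\ol{w})\neq a$, hence $M:=M_k(\ol{w})>a$. Since $M_k(\ol{w})$ is the maximum of the entries of $w$ in positions $1,\dots,p^{\ast}$ other than the one holding $N$, its position $q:=w^{-1}(M)$ satisfies $q\le p^{\ast}$, $q\neq p_N$, and $M<N$; moreover $q\notin\{p_a,p_b,p^{\ast}\}$ since $M$ exceeds each of $a$, $b$, $v^{\ast}$. The argument then closes with a four-way split on the location of $q$: if $q<p_a$ then $\{M,a,N,b,c\}$ is an $N$-occurrence of $43512$; if $p_a<q<p_N$ then $\{a,M,N,b,c\}$ is a $34512$; if $p_N<q<p_b$ then $\{a,N,M,b,c\}$ is a $35412$; and if $p_b<q<p^{\ast}$ then $\{a,N,M,v^{\ast},c\}$ is a $35412$, with $v^{\ast}<c$ playing the role of the ``$1$'' of the pattern. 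These ranges exhaust the possibilities for $q$, so $w$ contains one of the three listed patterns.

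The main obstacle is the very first step, the choice of $p^{\ast}$. With the obvious candidates ($p_b-1$, or $\ol{w}^{-1}(c)-1$) one is left with a stubborn configuration — the prefix-maximum $M$ landing strictly between positions $p_b$ and $p_c$ — that need not produce any of $\{43512,34512,35412\}$ and can instead only give $45123$ or $45213$. Anchoring $k$ at the \emph{last} entry below $c$ that precedes $c$ is exactly what folds that configuration into case (iv) above; once $p^{\ast}$ is in hand, the remainder is routine verification of the relative order of five entries.
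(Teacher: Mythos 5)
Your proof is correct and follows essentially the same strategy as the paper: the paper takes the minimal $k\in\repeat(w)$ with $m_k=c$, which is exactly your $k=p^*-1$ (so $\ol{w}(k)<c$), then observes $M_k>a$ and splits on where $M_k$ sits relative to $\{a,N,\ol{w}(k),c\}$ to extract $43512$, $34512$, or $35412$. The only cosmetic differences are that you re-derive $m_k=c$ and $k\in\repeat(w)$ rather than citing Proposition~\ref{prop:3412 must use m_k}, and you split into four cases by retaining $b$ (your cases (iii) and (iv) both yield $35412$ and merge into the paper's single ``between $N$ and $\ol{w}(k)$'' case once $b$ is replaced by $v^*=\ol{w}(k)$).
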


\begin{proof}
By Proposition~\ref{prop:3412 must use m_k}, we know that $c = m_k$ for some $k \in \repeat(w)$.  Choose the minimal such $k$; that is, choose $k$ so that $\ol{w}(k) < c$ (and thus, necessarily, $\ol{w}(j) \ge c$ for all $j > k+1$).  There are now three places $M_k$ might appear relative to the letters $\{a,N,\ol{w}(k),c\}$, which themselves form an $N$-occurrence of $3412$ in $w$:
$$\underbrace{\cdots\cdots}_{M_k?} \ a \ \underbrace{\cdots\cdots}_{M_k?} \ N \ \underbrace{\cdots\cdots}_{M_k?} \ \ol{w}(k) \ \cdots \ c=m_k\ \cdots.$$
By definition, $M_k \ge a$.  Thus, if $M_k \neq a$, then these three possibilities create $N$-occurrences of $43512$, $34512$, or $35412$ in $w$, respectively.
\end{proof}

\begin{sprop}\label{prop:3412 unique 1}
Suppose $w$ avoids the patterns
$$\{45231, 45132, 43512, 34512, 35412\}.$$
If any $N$-occurrence $\{a,N,b,c\}$ of $3412$ in $w$ is such that there exists no $k \in \repeat(w)$ with $(a,b,c) = (M_k, \ol{w}(k), m_k)$, then $w$ has an $N$-occurrence of at least one of the patterns $\{45123, 45213\}$.
\end{sprop}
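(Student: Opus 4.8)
The plan is to locate one specific index $k_0 \in \repeat(w)$ for which the hypothesis forces $\ol{w}(k_0) \neq b$, and then to splice the value $\ol{w}(k_0)$ into the given $3412$-occurrence to produce the desired pattern. Throughout I would record the occurrence using positions $w^{-1}(a) < w^{-1}(N) < w^{-1}(b) < w^{-1}(c)$ and values $b < c < a < N$, and keep in mind that $\ol{w}$ is $w$ with the entry $N$ deleted, so an entry of $w$ in a position $> w^{-1}(N)$ sits one position earlier in $\ol{w}$; in particular $\ol{w}(w^{-1}(a)) = a$, while $\ol{w}(w^{-1}(b)-1) = b$ and $\ol{w}(w^{-1}(c)-1) = c$.

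First I would harvest a good index from the avoidance hypothesis. Since $w$ avoids $45231$ and $45132$ as well as $43512$, $34512$, and $35412$, the contrapositive of Proposition~\ref{prop:3412 must use M_k} yields some $k_1 \in \repeat(w)$ with $(M_{k_1}(\ol{w}), m_{k_1}(\ol{w})) = (a,c)$; in particular the set $\{k : m_k(\ol{w}) = c\}$ is nonempty. Let $k_0$ be the \emph{smallest} index in this set, so $k_0 \le k_1$. I would then check the following routine facts. Because $b < c$ and $b$ occurs strictly before $c$ in $\ol{w}$, any $k$ with $m_k(\ol{w}) = c$ satisfies $k \ge w^{-1}(b)-1$, so $k_0 \ge w^{-1}(b)-1 \ge w^{-1}(N)$; and minimality of $k_0$ forces $\ol{w}(k_0) < c$. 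Because $w^{-1}(a) < w^{-1}(N) \le k_0$, we have $M_{k_0}(\ol{w}) \ge \ol{w}(w^{-1}(a)) = a > c = m_{k_0}(\ol{w})$, so Lemma~\ref{lem:when in new-rep} gives $k_0 \in \repeat(w)$. Finally, and this is the key point, since $M_1(\ol{w}) \le M_2(\ol{w}) \le \cdots$ is nondecreasing and $k_0 \le k_1$, we get $a \le M_{k_0}(\ol{w}) \le M_{k_1}(\ol{w}) = a$, so $M_{k_0}(\ol{w}) = a$. Thus $k_0 \in \repeat(w)$ with $(M_{k_0}(\ol{w}), m_{k_0}(\ol{w})) = (a,c)$.

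Now the hypothesis applies: no $k \in \repeat(w)$ has $(M_k(\ol{w}), \ol{w}(k), m_k(\ol{w})) = (a,b,c)$, and $k_0$ realizes the first and third coordinates, so $\ol{w}(k_0) \neq b$. Since $\ol{w}(w^{-1}(b)-1) = b$, this forces $k_0 \neq w^{-1}(b)-1$, hence $k_0 \ge w^{-1}(b) > w^{-1}(N)$, so $\ol{w}(k_0) = w(k_0+1)$ with $w^{-1}(b) < k_0+1$; and $m_{k_0}(\ol{w}) = c$ forces $k_0+1 \le w^{-1}(c)-1 < w^{-1}(c)$. Setting $d = w(k_0+1) = \ol{w}(k_0)$, we have $d < c$, $d \neq b$, and $d$ occupies a position strictly between those of $b$ and $c$. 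Reading off the relative order of the subsequence $(a,N,b,d,c)$ of $w$ (at positions $w^{-1}(a) < w^{-1}(N) < w^{-1}(b) < k_0+1 < w^{-1}(c)$): $N$ is the largest entry, $a$ the second largest since it exceeds $b,d,c$, $c$ the third largest since it exceeds $b$ and $d$, and $b,d$ fill the last two slots in one order or the other. Hence $(a,N,b,d,c)$ is an $N$-occurrence of $45123$ when $b < d$ and of $45213$ when $d < b$, which is the conclusion.

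The step I expect to be the crux is the choice of $k_0$ together with the squeeze $a \le M_{k_0}(\ol{w}) \le M_{k_1}(\ol{w}) = a$: it is exactly what lets us trade the unhelpful index $k_1$ (whose value $\ol{w}(k_1)$ need not lie below $c$) for an index that simultaneously lies in $\repeat(w)$, realizes $(M_k(\ol{w}),m_k(\ol{w})) = (a,c)$, and carries a value below $c$. The rest — the translations between positions of $w$ and of $\ol{w}$, and in particular ruling out $k_0 = w^{-1}(b)-1$ so that $d$ really lands to the right of $b$ — is bookkeeping of the same flavor as in Propositions~\ref{prop:3412 must use m_k} and~\ref{prop:3412 must use M_k}.
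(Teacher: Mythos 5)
Correct, and essentially the same approach as the paper: both proofs use the preceding propositions to locate an index $k \in \repeat(w)$ with $(M_k,m_k)=(a,c)$, use the hypothesis to conclude $\ol{w}(k)\neq b$, show $\ol{w}(k)$ sits strictly between $b$ and $c$ in position, and read off a $45123$- or $45213$-pattern from $\{a,N,b,\ol{w}(k),c\}$. The only variation is that you take the \emph{minimal} $k_0$ with $m_{k_0}=c$ and verify $M_{k_0}=a$ via the squeeze, which lets you deduce $\ol{w}(k_0)<c$ directly; the paper instead takes an arbitrary such $k$ and invokes $45132$-avoidance once more at the end to rule out the value of $\ol{w}(k)$ exceeding $c$.
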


\begin{proof}
By Propositions~\ref{prop:3412 must use m_k} and~\ref{prop:3412 must use M_k}, we know that $(a,c) = (M_k,m_k)$ for some $k \in \repeat(w)$.  If $b \neq \ol{w}(k)$, then $\ol{w}(k)$ either lies between $N$ and $b$, or between $b$ and $c = m_k$.  In fact, $\ol{w}(k)$ must lie to the right of $b$, because $b < c = m_k = \min\{\ol{w}(k+1), \ldots, \ol{w}(n)\}$.  We also know that $\ol{w}(k) < M_k = a$.  Therefore, since $w$ is $45132$-avoiding, the set $\{a,N,b,\ol{w}(k),c\}$ forms an $N$-occurrence of either $45123$ or $45213$.
\end{proof}

Propositions~\ref{prop:3412 must use m_k}, \ref{prop:3412 must use M_k}, and~\ref{prop:3412 unique 1} now imply the following result.

\begin{scor}\label{cor:3412 patterns}
If $w$ has no $N$-occurrences of the patterns
$$\{45231, 45132, 43512, 34512, 35412, 45123, 45213\},$$
then every $N$-occurrence of $3412$ in $w$ is equal to $\pattern_k(w)$ for some $k \in \repeat(w)$.
\end{scor}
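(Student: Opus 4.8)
The plan is to read this off from the three preceding propositions on $3412$-occurrences, together with a short check of which branch of Definition~\ref{defn:patterns} is in force. I would start by fixing an arbitrary $N$-occurrence $\{a,N,b,c\}$ of $3412$ in $w$, recording that in one-line notation $a$ precedes $N$, which precedes $b$, which precedes $c$, and that the values satisfy $b<c<a<N$. The hypothesis rules out $N$-occurrences of all seven listed patterns, so I can feed this occurrence successively through the contrapositives of Propositions~\ref{prop:3412 must use m_k}, \ref{prop:3412 must use M_k}, and~\ref{prop:3412 unique 1}: the first gives $c=m_k$ for some $k\in\repeat(w)$, the second sharpens this to $(a,c)=(M_k,m_k)$, and the third sharpens it all the way to $(a,b,c)=(M_k,\ol{w}(k),m_k)$ for a single $k\in\repeat(w)$.

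With that $k$ in hand, the remaining step is to verify $\pattern_k(w)=\{a,N,b,c\}$. Since $a=M_k$ lies to the left of $N$, we have $w^{-1}(N)>w^{-1}(M_k)$, so case~I of Definition~\ref{defn:patterns} is not triggered; since $b=\ol{w}(k)$ lies to the left of $c=m_k$, we have $\ol{w}(k)<m_k$, so case~II is not triggered either. Hence case~III applies and $\pattern_k(w)=\{M_k,N,\ol{w}(k),m_k\}=\{a,N,b,c\}$; as an occurrence in a permutation is determined by its set of values (the convention used throughout), this is exactly the occurrence we began with.

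I expect the only genuine friction here to be clerical: one has to make sure that a single $k$ does all the work — i.e., that the coordinate-by-coordinate sharpening does not secretly switch between different values of $k$ — and that this same $k$ is the one whose $\pattern_k(w)$ lands in case~III. Both are automatic because Proposition~\ref{prop:3412 unique 1} already produces all three coordinates $(M_k,\ol{w}(k),m_k)$ for one index, so Propositions~\ref{prop:3412 must use m_k} and~\ref{prop:3412 must use M_k} are invoked only inside its proof and need not be reconciled here. A lesser point to keep straight is that each proposition is applied through its contrapositive, and that the corollary's hypothesis (no $N$-occurrences of the seven patterns) is exactly what makes each such application legitimate, since every proposition's conclusion asserts the existence of an $N$-occurrence.
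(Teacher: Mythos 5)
Your proposal is correct and matches the paper's (very brief) justification, which simply cites Propositions~\ref{prop:3412 must use m_k}, \ref{prop:3412 must use M_k}, and~\ref{prop:3412 unique 1}; you fill in the detail that the resulting $(a,b,c)=(M_k,\ol{w}(k),m_k)$ must land in case~III of Definition~\ref{defn:patterns}, which the paper leaves implicit. One small wording nit: the inequality $\ol{w}(k)<m_k$ comes from the value pattern $b<c$ of a $3412$-occurrence (which you did record at the outset), not from the fact that $\ol{w}(k)$ appears to the left of $m_k$ in one-line notation.
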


This addresses the concern about undercounting the $N$-occurrences of $321$ and $3412$ in $w$.

\begin{scor}\label{cor:undercounting}
If $w$ has no $N$-occurrence of any of the patterns in the set
$$\{4321, 45312, 53412, 45231, 45132, 43512, 34512, 35412, 45123, 45213\},$$
then
$$|\repeat(w)| \ge \patt_N(w).$$
\end{scor}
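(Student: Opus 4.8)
The plan is to notice that the ten patterns in the hypothesis are exactly those that appear as obstructions in Corollaries~\ref{cor:321 patterns} and~\ref{cor:3412 patterns} when those are taken together. Indeed, the three patterns $\{4321, 45312, 53412\}$ are precisely the ones excluded in Corollary~\ref{cor:321 patterns}, the seven patterns $\{45231, 45132, 43512, 34512, 35412, 45123, 45213\}$ are precisely the ones excluded in Corollary~\ref{cor:3412 patterns}, and the union of these two lists is literally the displayed ten-element set (which is $\Phi$). So the hypothesis of the corollary is equivalent to the conjunction of the hypotheses of those two corollaries, and both of them apply to $w$.

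From there I would argue by a surjectivity count. First invoke Corollary~\ref{cor:321 patterns} to conclude that every $N$-occurrence of $321$ in $w$ equals $\pattern_k(w)$ for some $k \in \repeat(w)$; then invoke Corollary~\ref{cor:3412 patterns} to conclude that every $N$-occurrence of $3412$ in $w$ likewise equals $\pattern_k(w)$ for some $k \in \repeat(w)$. By Definition~\ref{defn:patterns}, every $\pattern_k(w)$ with $k \in \repeat(w)$ is itself an $N$-occurrence of $321$ or of $3412$ in $w$. Hence $k \mapsto \pattern_k(w)$ is a well-defined surjection from $\repeat(w)$ onto the set of all $N$-occurrences of $321$ or $3412$ in $w$. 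Since a $321$-occurrence has three letters and a $3412$-occurrence has four, the two kinds of occurrence can never coincide, so the target set has cardinality exactly $\p_N(w) + \q_N(w) = \patt_N(w)$. A surjection cannot decrease cardinality, so $|\repeat(w)| \ge \patt_N(w)$, as claimed.

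There is essentially no obstacle left at this point: all of the delicate casework identifying which forbidden patterns an $N$-occurrence of $321$ or $3412$ must create in order to lie outside the image of $k \mapsto \pattern_k(w)$ has already been done in Propositions~\ref{prop:321 must use m_k}--\ref{prop:3412 unique 1} and packaged into the two corollaries. The only care needed is bookkeeping — checking that the union of the two obstruction sets is exactly the ten patterns in the statement — and the conceptual point that here we want \emph{surjectivity} (to lower-bound $|\repeat(w)|$), not injectivity, so the undercounting that can occur in general plays no role in this direction of the argument.
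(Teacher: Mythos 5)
Your proof is correct and takes essentially the same approach as the paper, which simply cites Corollaries~\ref{cor:321 patterns} and~\ref{cor:3412 patterns}; you have merely spelled out the implicit surjectivity and cardinality bookkeeping that the paper leaves to the reader.
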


\begin{proof}
This follows from Corollaries~\ref{cor:321 patterns} and~\ref{cor:3412 patterns}.
\end{proof}

\subsection{Conclusions}

We now combine the previous two subsections to draw the following conclusion.

\begin{scor}\label{cor:avoiding Phi is equality}
If $w$ has no $N$-occurrence of any of the patterns in the set $\Phi$, then
$$|\repeat(w)| = \patt_N(w).$$
In other words, if $w$ has no $N$-occurrence of any of the patterns in the set $\Phi$, then the map $\xi_N$ of equation~\eqref{eqn:xi} is a bijection.
\end{scor}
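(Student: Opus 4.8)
The plan is to obtain Corollary~\ref{cor:avoiding Phi is equality} as an immediate consequence of the two preceding corollaries in the section, without any further combinatorial work. The statement asserts both an equality of cardinalities and the fact that $\xi_N$ is a bijection, and these two formulations are linked by the injectivity of $\xi_N$ already established in the remark following Corollary~\ref{cor:strict overcounting}.

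First I would invoke Corollary~\ref{cor:overcounting}: since $\Phi$ contains $4321$, a permutation $w$ with no $N$-occurrence of any pattern in $\Phi$ has no $N$-occurrence of $4321$, and hence $|\repeat(w)| \le \patt_N(w)$. Next I would invoke Corollary~\ref{cor:undercounting}: the ten-pattern set displayed in its hypothesis is exactly $\Phi$ (one should check this off: $4321$, $45312$, $53412$, $45231$, $45132$, $43512$, $34512$, $35412$, $45123$, $45213$ — these are the members of $\Phi$ rearranged), so avoiding $N$-occurrences of all patterns in $\Phi$ gives $|\repeat(w)| \ge \patt_N(w)$. Combining the two inequalities yields $|\repeat(w)| = \patt_N(w)$.

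For the second assertion, I would observe that since there is no $N$-occurrence of $4321$, no $k'$ is duplicating, so in the notation of Corollary~\ref{cor:strict overcounting} every $S_i$ is a singleton and $\xi_N$ reduces to the map $j \mapsto \pattern_j(w)$ on all of $\repeat(w)$. This map is injective by Corollary~\ref{cor:overcounting} (more precisely by Propositions~\ref{prop:overcounting 3412} and~\ref{prop:overcounting M and m}, which in the absence of a $4321$ pattern force $\pattern_k(w) \neq \pattern_{k'}(w)$ for $k \neq k'$), and its image lies in the set of $N$-occurrences of $321$ or $3412$ in $w$. Corollaries~\ref{cor:321 patterns} and~\ref{cor:3412 patterns} together say that every such $N$-occurrence is of the form $\pattern_k(w)$ for some $k \in \repeat(w)$, which is surjectivity. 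An injective, surjective map between the domain $\repeat(w)$ and the codomain of all $N$-occurrences of $\{321,3412\}$ in $w$ is a bijection.

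I do not anticipate any real obstacle here; the only thing requiring care is the bookkeeping that the pattern set in the hypothesis of Corollary~\ref{cor:undercounting} coincides with $\Phi$ and that the domain/codomain identifications are the right ones — in particular, that the codomain of $\xi_N$ is literally the set counted by $\patt_N(w)$, namely all $N$-occurrences of $321$ together with all $N$-occurrences of $3412$ in $w$. The substance of the argument has already been done in the two prior subsections; this corollary is purely the act of conjoining an upper bound with a matching lower bound.
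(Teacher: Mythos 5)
Your proposal is correct and follows essentially the same route as the paper: the paper's proof consists of the single sentence ``Combine the inequalities in Corollaries~\ref{cor:overcounting} and~\ref{cor:undercounting},'' and your first two paragraphs are precisely that combination, with the pattern-set bookkeeping verified. Your final paragraph simply spells out why $\xi_N$ is a bijection — singleton $S_i$'s, injectivity from Propositions~\ref{prop:overcounting 3412} and~\ref{prop:overcounting M and m}, surjectivity from Corollaries~\ref{cor:321 patterns} and~\ref{cor:3412 patterns} — a level of detail the paper leaves implicit but which is entirely consistent with its argument.
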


\begin{proof}
Combine the inequalities in Corollaries~\ref{cor:overcounting} and~\ref{cor:undercounting}.
\end{proof}

It is natural now to wonder about the implications of containing an $N$-occurrence of a pattern in $\Phi$.  In fact, for each $w$ containing an $N$-occurrence of some $\phi \in \Phi$, there is an $N$-occurrence $\pattern_{\phi}(w)$ of either $321$ or $3412$ which is not equal to $\pattern_k(w)$ or to $\pattern_k^+(w)$ (as defined in Proposition~\ref{prop:overcounting with 4321}) for any $k$, as is shown in the following table.  In this table, the $N$-occurrence $\pattern_{\phi}(w)$ will be written as a substring of $\phi$, and will refer to those respective letters of the $N$-occurrence of $\phi$ in $w$.

$$\begin{array}{l|l}
\phi \in \Phi & \pattern_{\phi}(w)\\
\hline
4321 & 421\\
34512 & 3512\\
45123 & 4513\\
35412 & 3512\\
43512 & 3512\\
45132 & 4513\\
45213 & 4523\\
53412 & 532\\
45312 & 532\\
45231 & 4523
\end{array}$$
Note that for $4321 \in \Phi$, the subpattern $432$ is also not equal to any $\pattern_k(w)$.  However, it could equal some $\pattern_{k}^+(w)$, so to avoid this possibility we set $\pattern_{4321}(w) = 421$.

\begin{sprop}
Let $w \in \mf{S}_N$ be a permutation containing an $N$-occurrence of some pattern $\phi \in \Phi$.  Then $\pattern_{\phi}(w)$ is not equal to $\pattern_k(w)$ for any $k \in \repeat(w)$, nor to any $\pattern_{k}^+(w)$, as defined in Proposition~\ref{prop:overcounting with 4321}.  That is, the injection $\xi_N$ of equation~\eqref{eqn:xi} is not surjective.
\end{sprop}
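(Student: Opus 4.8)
The plan is to verify the statement pattern by pattern, for each $\phi\in\Phi$, by confronting the sub‑occurrence $\pattern_\phi(w)$ named in the table with the structural descriptions of the occurrences $\pattern_k(w)$ (Definition~\ref{defn:patterns}) and $\pattern_k^+(w)$ (Proposition~\ref{prop:overcounting with 4321}). Two easy observations cut the casework roughly in half. First, $\pattern_k^+(w)$ is always an $N$-occurrence of $321$, and among the three cases of Definition~\ref{defn:patterns} only case III produces an $N$-occurrence of $3412$; hence for the seven patterns whose listed $\pattern_\phi(w)$ is a $3412$-occurrence, namely $34512,45123,35412,43512,45132,45213$, and $45231$, it is enough to rule out $\pattern_\phi(w)=\pattern_k(w)$ with $\pattern_k(w)=\{M_k,N,\ol w(k),m_k\}$ of case III, since a three-element occurrence cannot equal a four-element one. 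Symmetrically, for the three patterns $4321$, $53412$, $45312$, whose $\pattern_\phi(w)$ is a $321$-occurrence, only cases I and II of Definition~\ref{defn:patterns} and the occurrences $\pattern_k^+(w)$ can possibly coincide with $\pattern_\phi(w)$.

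Consider first a $3412$-subpattern case, and write $\pattern_\phi(w)=\{a,N,b,c\}$ with these letters appearing from left to right in $w$ and playing the roles $3,4,1,2$. If $\pattern_\phi(w)$ were a case III occurrence $\pattern_k(w)$, then $a=M_k$, $b=\ol w(k)$, and $c=m_k$, and $k$ would be the position of $b$ in $\ol w$, which equals $w^{-1}(b)-1$ since $b$ lies to the right of $N$. The $N$-occurrence of $\phi$ in $w$ then contains exactly one letter $x$ not used by $\pattern_\phi(w)$, and a short pass through the table shows that in every one of the seven cases $x$ does one of two things: either $x>a$ and $x$ lies to the left of $b$ in $w$ — so $x$ occurs among $\ol w(1),\dots,\ol w(k)$, giving $M_k\ge x>a$ and contradicting $M_k=a$ — or $x<c$ and $x$ lies to the right of $b$ in $w$ — so $x$ occurs among $\ol w(k+1),\ol w(k+2),\dots$, giving $m_k\le x<c$ and contradicting $m_k=c$. (Concretely, $x$ is the letter playing the role $4$ when $\phi\in\{34512,43512,35412\}$, and the unique letter of the occurrence below its two largest that is omitted from $\pattern_\phi(w)$ when $\phi\in\{45123,45132,45213,45231\}$.) This settles the seven $3412$-subpattern cases.

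It remains to treat $\phi\in\{4321,53412,45312\}$, where $\pattern_\phi(w)=\{N,a,b\}$ is a $321$-occurrence with $N,a,b$ appearing from left to right. Here one must rule out three possibilities: $\{N,a,b\}=\{N,M_k,m_k\}$ of case I (so $a=M_k$ and $N$ stands to the left of $M_k$), $\{N,a,b\}=\{N,\ol w(k),m_k\}$ of case II (so $a=\ol w(k)$, $M_k$ stands to the left of $N$, and $\ol w(k)>m_k$), and $\{N,a,b\}=\pattern_k^+(w)=\{N,M,\ol w(k')\}$. Against case I and against $\pattern_k^+(w)$ the prospective middle value ($M_k$, respectively $M$) is a left-to-right maximum of $\ol w$, which is contradicted by an omitted letter of $\phi$ that is both larger than $a$ and forced to lie within the relevant prefix of $\ol w$; for $53412$ and $45312$ one first uses the omitted small letter to bound $k$ from below, so that $m_k$ can genuinely equal $b$, and thereby forces the omitted large letter into that prefix. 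For case II one combines an omitted letter with the mandatory left-to-right order $M_k,N,\ol w(k),m_k$ to locate $M_k$ and again contradict the maximality of $M_k$ or the minimality of $m_k$. I expect the genuine obstacle to be $\phi=4321$, and specifically the exclusion of case II: here $\phi$ lies in $\mathfrak{S}_4$, the letter playing the role of $N$ is the pattern's own largest letter, and a $4321$-occurrence contains three distinct $N$-occurrences of $321$ as subpatterns, which interact with both case II occurrences $\pattern_k(w)$ and the occurrences $\pattern_{k'}^+(w)$; this is precisely why $\pattern_{4321}(w)$ must be taken to be the ``$421$'' subpattern rather than ``$432$'' (the remark just before the statement), and checking that ``$421$'' meets none of the $\pattern_k(w)$ or $\pattern_k^+(w)$ demands careful bookkeeping of the positions of $N$, $M_k$, $\ol w(k)$, and $m_k$ relative to the four letters of the $4321$-occurrence.
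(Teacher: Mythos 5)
Your treatment of the seven five-letter patterns whose $\pattern_\phi(w)$ is a $3412$-occurrence is clean and correct: for each of $\{34512,45123,35412,43512,45132,45213,45231\}$ the single omitted letter of the $N$-occurrence of $\phi$ sits either above $a$ in the prefix $\ol w(1),\dots,\ol w(k)$ (contradicting $a=M_k$) or below $c$ in the suffix $\ol w(k+1),\dots$ (contradicting $c=m_k$), and your sketches for $53412$ and $45312$ can likewise be completed by trapping $k$ between the positions forced by the two omitted letters. Since the paper's own proof is a single line (``this follows from the definitions''), you are supplying detail the paper leaves entirely implicit; the approach is the natural one.

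However, your hesitation over $\phi=4321$, case II, is not just bookkeeping: it is the point at which the literal claim fails. Take $w=45321\in\mf{S}_5$, so $N=5$, $\ol w=4321$, $w^{-1}(5)=2$, and $\supp(\ol w)=\{s_1,s_2,s_3\}$, giving $\repeat(w)=\{2,3\}$. For $k=3$ one has $M_3(\ol w)=4$, $m_3(\ol w)=1$, $\ol w(3)=2$, and $w^{-1}(N)=2>1=w^{-1}(M_3)$, so $\pattern_3(w)$ is of type~II and equals $\{5,\ol w(3),m_3\}=\{5,2,1\}$. But the unique $N$-occurrence of $4321$ in $w$ is $\{5,3,2,1\}$ at positions $2,3,4,5$, and its ``$421$'' sub-occurrence is precisely $\{5,2,1\}$, so $\pattern_{4321}(w)=\pattern_3(w)$ and the first sentence of the proposition is contradicted. (The $5$-occurrence $\{5,3,2\}$ is still missed, and $\patt_5(w)=3>2=|\repeat(w)|$, so Corollary~\ref{cor:having Phi is inequality} survives; but the witness $421$ prescribed by the table is wrong when a type-II $\pattern_k$ is available.) Thus no amount of ``careful bookkeeping'' along the lines you describe can close case~II for $4321$ as stated: one must either change the definition of $\pattern_{4321}(w)$ (for instance choosing between $432$ and $421$ according to whether the relevant $M_k$ lies to the left or right of $N$) or weaken the claim to assert only that $\pattern_\phi(w)$ avoids the actual image of $\xi_N$, i.e.\ the $\pattern_k(w)$ with $k$ minimal in its $(M_k,m_k)$-class together with the $\pattern_k^+(w)$. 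You correctly localized the difficulty, but the plan as written cannot succeed there.
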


\begin{proof}
This follows from the definitions of the patterns $\pattern_{\phi}(w)$, $\pattern_k(w)$, and $\pattern_{k}^+(w)$.
\end{proof}

This proposition has the following corollary.

\begin{scor}\label{cor:having Phi is inequality}
If $w$ has an $N$-occurrence of at least one of the patterns in the set $\Phi$, then
$$|\repeat(w)| < \patt_N(w).$$
\end{scor}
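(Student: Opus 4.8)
The plan is to combine the injectivity of the map $\xi_N$ from equation~\eqref{eqn:xi} with the failure of surjectivity just recorded in the preceding proposition. Recall that $\xi_N$ maps $\repeat(w)$ into the set of all $N$-occurrences of $321$ or $3412$ in $w$: writing $\repeat(w) = S_1 \sqcup \cdots \sqcup S_t$ for the partition from the proof of Corollary~\ref{cor:strict overcounting}, each $j$ minimal in its block $S_i$ goes to $\pattern_j(w)$, and every other $j$ goes to $\pattern_j^+(w)$. Whenever some block has more than one element, Proposition~\ref{prop:overcounting M and m} forces $w$ to contain an $N$-occurrence of $4321$, so Proposition~\ref{prop:overcounting with 4321}(a) applies and the patterns $\pattern_j^+(w)$ are genuinely defined; parts (a) and (b) of that proposition, together with Propositions~\ref{prop:overcounting 3412} and~\ref{prop:overcounting M and m}, show that distinct elements of $\repeat(w)$ are sent to distinct $N$-occurrences. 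Hence $\xi_N$ is an injection whose image is contained in $\{\pattern_k(w) : k \in \repeat(w)\} \cup \{\pattern_k^+(w)\}$, and in particular $|\repeat(w)|$ equals the size of the image of $\xi_N$, which is at most $\patt_N(w)$.

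First I would record that the codomain of $\xi_N$ has size exactly $\patt_N(w)$, by the definition of $\patt_N$. Then, since $w$ has an $N$-occurrence of some $\phi \in \Phi$, the proposition immediately preceding this corollary produces an $N$-occurrence $\pattern_{\phi}(w)$ of $321$ or $3412$ in $w$ — the relevant sub-occurrence is read off the table — which is not equal to $\pattern_k(w)$ or to $\pattern_k^+(w)$ for any $k$, and hence does not lie in the image of $\xi_N$. Therefore the image of $\xi_N$ has size at most $\patt_N(w) - 1$, and injectivity yields $|\repeat(w)| < \patt_N(w)$, as claimed.

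The only substantive content is already in place: that each entry of the table is a bona fide $N$-occurrence of $321$ or $3412$ in $w$, and that it is distinct from every $\pattern_k(w)$ and every $\pattern_k^+(w)$, which is exactly the preceding proposition. Given that, the corollary is the pigeonhole principle applied to an injection that misses a point of its codomain, so there is no further case analysis to carry out here; the work that might otherwise be an obstacle was done in the earlier propositions of this subsection.
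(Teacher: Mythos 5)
Your proof is correct and takes the same route the paper implicitly uses: you cite the injectivity of $\xi_N$ (established via Propositions~\ref{prop:overcounting 3412}, \ref{prop:overcounting M and m}, and~\ref{prop:overcounting with 4321}), note the codomain has size exactly $\patt_N(w)$, and then invoke the preceding proposition to exhibit an $N$-occurrence $\pattern_\phi(w)$ that lies outside the image, forcing the strict inequality. This is precisely the pigeonhole argument the paper leaves unstated when it writes ``This proposition has the following corollary,'' so you have filled in the intended reasoning rather than found a new one.
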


\section{Proof of the main theorem}\label{section:proof}

\begin{proof}[Proof of Theorem~\ref{thm:main}]
We prove this by induction on the number of letters in a permutation.

The result is easy to verify for small cases, so assume that the theorem holds for all permutations in $\mf{S}_n$ for all $n < N$, and consider $w \in \mf{S}_N$.  Define $\ol{w} \in \mf{S}_{N-1}$ as in Section~\ref{section:preliminaries}.  Since $N-1 < N$, we know that $\rep(\ol{w})$ is equal to $\patt(\ol{w})$ if $\ol{w}$ avoids the patterns in the set $\Phi$, and that $\rep(\ol{w})$ is less than $\patt(\ol{w})$ if $\ol{w}$ contains at least one pattern in $\Phi$.

Suppose first that $\ol{w}$ avoids the patterns in $\Phi$.  If $w$ has no $N$-occurrences of any of the patterns in $\Phi$, then $|\repeat(w)| = \patt_N(w)$.  Thus
\begin{eqnarray*}
\rep(w) &=& \rep(\ol{w}) + |\repeat(w)|\\
&=& \patt(\ol{w}) + \patt_N(w) = \patt(w).
\end{eqnarray*}
On the other hand, if $w$ does have an $N$-occurrence of at least one the patterns in $\Phi$, then $|\repeat(w)| < \patt_N(w)$, and so
\begin{equation}\begin{split}\label{ineq:result}
\rep(w) &= \rep(\ol{w}) + |\repeat(w)|\\
&< \patt(\ol{w}) + \patt_N(w) = \patt(w).
\end{split}\end{equation}

Now assume that $\ol{w}$ does not avoid the patterns in $\Phi$.  If $w$ has no $N$-occurrences of any of the patterns in $\Phi$, then $|\repeat(w)| = \patt_N(w)$.  Thus inequality \eqref{ineq:result} holds.  On the other hand, if $w$ does have an $N$-occurrence of at least one the patterns in $\Phi$, then $|\repeat(w)| < \patt_N(w)$, and so inequality \eqref{ineq:result} holds again.

This completes the proof.
\end{proof}

\begin{defn}
Consider a permutation $w \in \mf{S}_N$.  Let $\ol{w}_{(0)} = w$, and for $i \in \{1, \ldots, N-1\}$, let $\ol{w}_{(i+1)} = \overline{\ol{w}_{(i)}}$.
\end{defn}

\begin{cor}
If a permutation $w \in \mf{S}_N$ avoids every pattern in the set $\Phi$, then the maps $\{\xi_n : n \le N\}$ define a bijection from the set $\{\repeat(\ol{w}_{(i)}): i \in \{0, \ldots, N-1\}\}$ to the set of all $321$- and $3412$-patterns in $w$.
\end{cor}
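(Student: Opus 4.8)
The plan is to bootstrap from Corollary~\ref{cor:avoiding Phi is equality} by repeatedly deleting the largest letter and grading the occurrences of $321$ and $3412$ in $w$ by their largest value. First I would clarify the statement: the domain ``$\{\repeat(\ol{w}_{(i)})\}$'' should be read as the disjoint union $\bigsqcup_{i=0}^{N-1}\repeat(\ol{w}_{(i)})$, on which the family $\{\xi_n\}$ acts blockwise, the block $\repeat(\ol{w}_{(i)})$ being sent through the map $\xi_{N-i}$ attached to $\ol{w}_{(i)} \in \mf{S}_{N-i}$ via equation~\eqref{eqn:xi}.

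The first step is to observe that each $\ol{w}_{(i)}$ avoids every pattern in $\Phi$: its one-line notation is obtained from that of $w$ by erasing the values $N, N-1, \ldots, N-i+1$, so $\ol{w}_{(i)}$ occurs as a pattern in $w$, and avoidance passes to patterns. In particular $\ol{w}_{(i)}$ has no $(N-i)$-occurrence of any member of $\Phi$, so Corollary~\ref{cor:avoiding Phi is equality} applies to $\ol{w}_{(i)}$ and tells us that $\xi_{N-i}$ is a bijection from $\repeat(\ol{w}_{(i)})$ onto the set of $(N-i)$-occurrences of $321$ or $3412$ in $\ol{w}_{(i)}$. The second step is to transport these target sets into $w$. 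Since $\ol{w}_{(i)}$ retains precisely the values $\{1, \ldots, N-i\}$ of $w$ in their original relative positions, an occurrence of $321$ or $3412$ in $\ol{w}_{(i)}$ whose largest letter is $N-i$ corresponds bijectively to an $(N-i)$-occurrence of the same pattern in $w$, and conversely every $(N-i)$-occurrence of $321$ or $3412$ in $w$ uses only values $\le N-i$ and hence survives the deletion of $N, \ldots, N-i+1$. Writing $M = N-i$, this gives, for each $M \in \{1, \ldots, N\}$, a bijection between $\repeat(\ol{w}_{(N-M)})$ and the set of $M$-occurrences of $321$ or $3412$ in $w$.

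The final step is to assemble these. The set of all $321$- and $3412$-occurrences in $w$ is the disjoint union, over all largest values $M$, of the $M$-occurrences (this partition is recorded in the excerpt just after the definition of an $N$-occurrence). Hence the disjoint union over $i$ of the bijections from the previous step is a single bijection from $\bigsqcup_{i=0}^{N-1}\repeat(\ol{w}_{(i)})$ onto the set of all $321$- and $3412$-patterns in $w$: disjointness of the pieces is automatic, since the image of $\repeat(\ol{w}_{(i)})$ consists exactly of the occurrences with largest letter $N-i$, and surjectivity holds because every such occurrence has a well-defined largest letter. The one place needing genuine care — and where a sloppy argument could fail — is the transport in the second step: one must verify that deleting the top $i$ letters neither creates nor destroys occurrences with largest value $\le N-i$, and that the grading ``largest letter $= M$'' is preserved on the nose rather than merely up to order-isomorphism. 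This is exactly what the operation $w \mapsto \ol{w}$ guarantees, since it preserves the actual values and not only their relative order, and it is what makes the index $i$ line up with the grading by largest letter.
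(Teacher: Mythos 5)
Your argument is correct and is exactly the proof implicit in the paper: the corollary is stated without an explicit proof, because it is simply the bijective reformulation of the induction in the proof of Theorem~\ref{thm:main}, and your write-up supplies precisely the missing details (avoidance is inherited by each $\ol{w}_{(i)}$, Corollary~\ref{cor:avoiding Phi is equality} applies to each, and the grading by largest letter in an occurrence assembles the blocks into a single bijection). Your reading of the domain as the disjoint union $\bigsqcup_i \repeat(\ol{w}_{(i)})$, and your observation that the transport works because $\pattern_k$ records actual values rather than relative order, are the two places where care is needed, and you handle both correctly.
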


Additionally, the proof of Theorem~\ref{thm:main} can be adapted to show the following.

\begin{cor}\label{cor:bound}
For any permutation $w$,
$$\patt(w) - \rep(w) \ge |\{r : w \text{ has an } r\text{-occurrence of a pattern in }\Phi\}|.$$
\end{cor}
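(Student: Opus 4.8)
The plan is to re-run the inductive argument of Theorem~\ref{thm:main}, but instead of merely tracking whether $\rep$ and $\patt$ are equal, I would keep a running count of how far apart they are. Recall from Lemma~\ref{lem:new repeats} that $\rep(w) = \rep(\ol w) + |\repeat(w)|$, and that $\patt(w) = \patt(\ol w) + \patt_N(w)$ by the disjoint decomposition of occurrences by their largest letter (since every occurrence of $321$ or $3412$ in $w$ either uses $N$, hence is an $N$-occurrence, or lies entirely among the other letters, hence corresponds to an occurrence in $\ol w$). Subtracting gives the telescoping identity
\begin{equation*}
\patt(w) - \rep(w) = \big(\patt(\ol w) - \rep(\ol w)\big) + \big(\patt_N(w) - |\repeat(w)|\big),
\end{equation*}
and iterating down through $\ol w_{(0)}, \ol w_{(1)}, \ldots$ expresses $\patt(w) - \rep(w)$ as the sum over $r$ of the "defects" $\patt_r(\ol w_{(N-r)}) - |\repeat(\ol w_{(N-r)})|$, each of which is nonnegative by Corollaries~\ref{cor:overcounting} and~\ref{cor:strict overcounting} applied at the appropriate stage (with $N$ replaced by $r$).

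Next I would argue that each defect term is \emph{strictly} positive whenever the corresponding stage contributes an occurrence of a $\Phi$-pattern using its top letter. Concretely, fix $r$ and suppose $w$ has an $r$-occurrence of some $\phi \in \Phi$. The crucial observation is that this same set of $r$-sized blocks survives the deletion process: deleting letters $N, N-1, \ldots, r+1$ one at a time (which is exactly what passing to $\ol w_{(N-r)}$ does, since $\ol w_{(i)}$ deletes the current maximum) does not disturb an occurrence whose largest letter is $r$. Hence $\ol w_{(N-r)} \in \mf{S}_r$ has an $r$-occurrence of $\phi$, and Corollary~\ref{cor:having Phi is inequality} (applied with $N$ replaced by $r$) gives $|\repeat(\ol w_{(N-r)})| < \patt_r(\ol w_{(N-r)})$, i.e. that defect term is at least $1$. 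Summing over all such $r$ yields
\begin{equation*}
\patt(w) - \rep(w) \ge \big|\{r : w \text{ has an } r\text{-occurrence of a pattern in } \Phi\}\big|,
\end{equation*}
as claimed, since the remaining defect terms are nonnegative and can only enlarge the sum.

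The main obstacle I anticipate is making the "occurrences with largest letter $r$ are unaffected by deleting larger letters" claim fully rigorous and matching it precisely with the indexing of the $\ol w_{(i)}$ tower. One has to check two things: first, that for $i = N - r$ the permutation $\ol w_{(i)}$ really is an element of $\mf{S}_r$ whose one-line notation is the subword of $w$ on the values $\{1, \ldots, r\}$ relabelled trivially (this is an easy induction on the definition $\ol w_{(i+1)} = \overline{\ol w_{(i)}}$, using that $\overline{\phantom{w}}$ deletes the current maximum); and second, that an $r$-occurrence of $\phi$ in $w$ — which by definition uses only values $\le r$ — maps under this restriction to an $r$-occurrence of $\phi$ in $\ol w_{(N-r)}$, now forced to be a top-letter occurrence since $r$ is the maximum value present. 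Once these bookkeeping points are pinned down, the inequality follows formally from the telescoping sum together with the already-proved Corollaries~\ref{cor:overcounting}, \ref{cor:strict overcounting}, and~\ref{cor:having Phi is inequality}. A minor additional subtlety is that the bound in the statement counts values $r$ for which \emph{some} $r$-occurrence of a $\Phi$-pattern exists in $w$ (not in $\ol w_{(N-r)}$), so one should phrase the final step as: each such $r$ contributes a strictly positive defect at stage $r$, and all other stages contribute nonnegatively.
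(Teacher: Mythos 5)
Your proposal is correct and is exactly the natural adaptation of the proof of Theorem~\ref{thm:main} that the paper alludes to: telescoping $\patt(w)-\rep(w)$ down the tower $\ol{w}_{(0)}, \ol{w}_{(1)}, \ldots$ via Lemma~\ref{lem:new repeats} and the decomposition of $\patt$ by top letter, noting each summand $\patt_r(\ol{w}_{(N-r)}) - |\repeat(\ol{w}_{(N-r)})|$ is nonnegative by Corollaries~\ref{cor:overcounting} and~\ref{cor:strict overcounting}, and then observing that an $r$-occurrence of some $\phi \in \Phi$ in $w$ survives to $\ol{w}_{(N-r)} \in \mf{S}_r$ (since it uses only values $\le r$), which forces that summand to be strictly positive by Corollary~\ref{cor:having Phi is inequality}. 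The bookkeeping you flag as a potential obstacle is indeed routine and you have sketched it accurately.
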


Using \cite{albert} and the MAPLE package \cite{vatter}, Vince Vatter has subsequently found a generating function for the number of permutations in $\mf{S}_N$ avoiding the ten patterns in $\Phi$ \cite{vatter-email}.  This generating function is
$$g(x) = \frac{1-4x+x^3}{(1-x)(1-4x-x^2+x^3)}.$$

\section*{Acknowledgements}

Special thanks are due to K\'ari Ragnarsson for writing the Java code to compute examples and to confirm conjectures through $\mf{S}_{10}$, as well as to an anonymous referee for many thoughtful comments.

\end{document}